\numberwithin{equation}{section} 
\numberwithin{figure}{section} 
\theoremstyle{theorem}
\newtheorem{thm}{Theorem}
\newtheorem{cor}{Corollary}[section]
\newtheorem{lem}[cor]{Lemma}
 \newtheorem{prop}[cor]{Proposition}
 \theoremstyle{remark}
  \newtheorem{rem}{Remark}
\newcommand{\1}{\mathbbm{1}}
\newcommand{\e}{\mathrm e}
\DeclareMathOperator{\Int}{Int}
\renewcommand{\phi}{\varphi}
\renewcommand{\tilde}{\widetilde}
\def\R{\mathbb{R}}
\def\N{\mathbb{N}}
\def\P{{\mathcal P}}
\def\R{\mathbb{R}}
\def\X{\mathbb{H}}
\def\N{\mathbb{N}}
\def\P{{\mathbb P}}
\def\ddelta{\boldsymbol{\delta}}
\def\h{\Phi}
\begin{document}


\title[A Fr\'{e}chet law and an Erd{\H o}s-Philipp law ]{A Fr\'{e}chet law and an Erd{\H o}s-Philipp law for maximal cuspidal windings}

\author{JOHANNES JAERISCH, MARC KESSEB{\"O}HMER \\ and  BERND O. STRATMANN}

\address{Department of Mathematics, Graduate School of Science\\ Osaka University, 1-1 Machikaneyama\\
Toyonaka, Osaka, 560-0043 Japan}
\email{jaerisch@cr.math.sci.osaka-u.ac.jp}

\address{Fachbereich 3 -- Mathematik und Informatik, Universit\"at Bremen \\ Bibliothekstra\ss e
1, 28359 Bremen, Germany.}
\email{mhk@math.uni-bremen.de, bos@math.uni-bremen.de}


\begin{abstract}
In this paper we establish a Fr\'{e}chet law for maximal cuspidal
windings of the geodesic flow on a Riemannian surface associated with
an arbitrary finitely generated, essentially free Fuchsian group with
parabolic elements. This result extends previous work by Galambos and Dolgopyat
and is obtained by applying Extreme Value Theory. Subsequently, we show
that this law gives rise to an Erd{\H o}s-Philipp law and to various generalised Khintchine-type
results for maximal cuspidal windings. These results strengthen previous results by Sullivan, Stratmann and Velani for Kleinian groups, and extend earlier
work by  Philipp on continued
fractions, which was inspired by a conjecture of Erd{\H o}s.
\end{abstract}

\keywords{Ergodic
Theory; Fuchsian groups; Extreme Value Theory; conformal measures; Fr\'{e}chet law; strong distributional convergence; Gibbs-Markov
property.}
\maketitle

\section{Introduction and Statements of Result}
 
We establish a Fr\'{e}chet law and an Erd{\H o}s-Philipp law for maximal cuspidal
windings of the geodesic flow on $\X/G$,  for a finitely generated, essentially
free Fuchsian group $G$ acting on the upper half-space model $(\X,d)$
of $2$-dimensional hyperbolic space.  Recall that to each $\xi$ in the radial limit set $L_{r}(G)$ of $G$ one can associate an infinite
word expansion in the symmetric set $G_{0}$ of generators of $G$. Namely, with $F$ referring to the Dirichlet fundamental domain
of $G$ at $i \in \X$,  the images of $F$ under $G$ tesselate $\X$ and each side of each of the tiles is uniquely labeled
by an element of $G_{0}$.  The hyperbolic ray $s_{\xi}$ from $i$ towards $\xi\in L_{r}(G)$ has to traverse infinitely many of these tiles, and  the infinite
word expansion associated with $\xi$ is then obtained by progressively recording, starting at $i$,  the generators of the sides at which $s_{\xi}$ exits the tiles. In this way we derive an infinite reduced word on the alphabet $G_{0}$. We then form blocks as follows. Each hyperbolic generator in this word has block length $1$. Further, if there is a block in which the same parabolic generator appears $n$ times and
if there is no larger block of this parabolic generator containing that block, then this block is of length $n$. This allows us to define the process $(X_{k})$
by setting $X_{k}$ to be equal to the length of the $k$-th block. By construction, such a block of length $n$ corresponds to
the event that the projection of $s_{\xi}$ onto $\X/G$ spirals precisely $n-1$ times around a cusp of $\X/G$.
The main results of this paper will establish  asymptotic estimates and strong distributional convergence for
 the process  $(Y_{n})$, given by
\[
Y_{n}:=\max_{k=1,\dots,n}X_{k}.
\]

\begin{thm}[Fr\'{e}chet law for maximal cuspidal windings]\label{thm} For
each essentially free, finitely generated Fuchsian group $G$ with
parabolic elements and with exponent of convergence $\delta=\delta(G)$,  the following holds. For every $s>0$  and for each
probability measure $\nu$ absolutely continuous with respect to the
Patterson measure $m_{\delta}$ of $G$, we have that
\[
\lim_{n\to\infty}\nu\left(\left\{Y_{n}^{2\delta-1} / n\le s\right\} \right)=\exp \left(- \kappa\left(G\right)/s\right).
\]
Here, the constant $\kappa\left(G\right)$ is explicitly given by
\begin{align*}
\kappa(G): & =\sum_{\gamma\in\Gamma_{0}}\left(\h\left(p_{\gamma}\right)w_{\gamma}^{-\delta}\right)^{2} /  (\left(2\delta-1\right) \, \mu_{\delta}\left(\left\{ X_{1}=1\right\} \right)),
\end{align*}
where $\Gamma_{0}$ refers to the symmetric set of parabolic generators of $G$, $p_{\gamma}$ to the parabolic fixed point and $w_{\gamma}$ to the width of the cusp associated with
 $\gamma\in\Gamma_{0}$ (see the definition prior to Lemma \ref{prop:asymptotic-n-excursion}), $\mu_{\delta}$ to the (up to a multiplicative constant) unique measure absolutely
continuous with respect to $m_{\delta}$ which is invariant under
the Bowen-Series map (see Section 2.1 and  Section 2.2), and $\Phi$
to a version of the Radon-Nikodym derivative $d\mu_{\delta}/dm_{\delta}$
(see (\ref{eq:eigenfunction-of-PF}) for the definition).

\end{thm}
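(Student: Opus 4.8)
The plan is to transfer the problem to the symbolic model and to deduce the statement from an extreme value theorem for a stationary, weakly dependent sequence. Recall from Section~2 that the $G_{0}$-coding conjugates the Bowen--Series map $T$ on $L_{r}(G)$ to a shift, and that $\mu_{\delta}$, with Radon--Nikodym derivative $\Phi=d\mu_{\delta}/dm_{\delta}$, is the (essentially unique) $T$-invariant probability measure equivalent to the Patterson measure $m_{\delta}$. Grouping the $G_{0}$-letters into maximal blocks as in the Introduction gives rise to the jump transformation $\widehat{T}=T^{R}$ of $T$, where $R(\xi)$ is the length of the first block of $\xi$; since $R\circ T=R-1$ on $\{R>1\}$, the map $\widehat{T}$ preserves $\mu_{\delta}$, and the block partition makes $(L_{r}(G),\widehat{T},\mu_{\delta})$ a Gibbs--Markov system. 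With respect to $\widehat{T}$ the process $(X_{k})$ is stationary, $X_{k}=R\circ\widehat{T}^{\,k-1}$, and $Y_{n}=\max_{k\le n}X_{k}$; it therefore suffices to establish the Fr\'{e}chet law first with respect to $\mu_{\delta}$ and then to upgrade it to an arbitrary $\nu\ll m_{\delta}$.

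The geometric input is the precise asymptotics of a single deep cuspidal excursion. A first block equal to a power of $\gamma\in\Gamma_{0}$ of length $n$ forces the geodesic ray from $i$ to wind $n-1$ times around the cusp of width $w_{\gamma}$ at $p_{\gamma}$, which confines both endpoints of the excursion to a neighbourhood of $p_{\gamma}$ of radius $\asymp(n\,w_{\gamma})^{-1}$; evaluating the relevant $m_{\delta}$-masses by the Stratmann--Velani global measure formula and correcting by $\Phi$, which is continuous and positive at $p_{\gamma}$, produces two factors of order $\Phi(p_{\gamma})\,w_{\gamma}^{-\delta}\,n^{-\delta}$, whence $\mu_{\delta}(\{X_{1}=n\})\sim c\,n^{-2\delta}$ with $c=\bigl(\sum_{\gamma\in\Gamma_{0}}(\Phi(p_{\gamma})w_{\gamma}^{-\delta})^{2}\bigr)/\mu_{\delta}(\{X_{1}=1\})$. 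This is the content of Lemma~\ref{prop:asymptotic-n-excursion}; summing over $n$ shows that the tail of $X_{1}$ is regularly varying of index $-(2\delta-1)$, with
\[
\lim_{n\to\infty} n^{2\delta-1}\,\mu_{\delta}\bigl(\{X_{1}>n\}\bigr)=\kappa(G).
\]

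For the extreme value step, put $u_{n}=u_{n}(s):=(sn)^{1/(2\delta-1)}$, so that $\{Y_{n}^{2\delta-1}/n\le s\}=\bigcap_{k=1}^{n}\{X_{k}\le u_{n}\}$ and $n\,\mu_{\delta}(\{X_{1}>u_{n}\})\to\kappa(G)/s$. It remains to show that the dependence in $(X_{k})$ is asymptotically negligible. Since $\widehat{T}$ is Gibbs--Markov it has exponential decay of correlations for observables that are constant on one-cylinders, and the events $\{X_{1}>u_{n}\}$ are unions of one-cylinders; this yields Leadbetter's long-range mixing condition $D(u_{n})$. For the anti-clustering condition $D'(u_{n})$ the decisive point is that the blocks are \emph{maximal}: two consecutive blocks can never be powers of the same parabolic generator, so a deep excursion is never immediately followed by another into the same cusp, and hence $\mu_{\delta}(\{X_{1}>u_{n},X_{2}>u_{n}\})\asymp\mu_{\delta}(\{X_{1}>u_{n}\})^{2}$; combined with the Gibbs--Markov quasi-independence and decay of correlations for the $j$-separated events this gives $n\sum_{j=2}^{[n/k]}\mu_{\delta}(\{X_{1}>u_{n},X_{j}>u_{n}\})\to 0$ as first $n\to\infty$ and then $k\to\infty$. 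By Leadbetter's theorem $\mu_{\delta}(\{Y_{n}\le u_{n}\})\to\exp(-\kappa(G)/s)$, which is the asserted law with respect to $\mu_{\delta}$.

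Finally, to pass to an arbitrary probability measure $\nu\ll m_{\delta}$, note that $\widehat{T}$ is ergodic and preserves the probability measure $\mu_{\delta}$, and that $Y_{n}^{2\delta-1}/n$ is asymptotically $\widehat{T}$-invariant: since $|Y_{n}-Y_{n}\circ\widehat{T}|\le\max(X_{1},X_{n+1})$ and $x\mapsto x^{2\delta-1}$ is subadditive for $0<2\delta-1\le 1$, one has $n^{-1}|Y_{n}^{2\delta-1}-(Y_{n}\circ\widehat{T})^{2\delta-1}|\le n^{-1}\max(X_{1},X_{n+1})^{2\delta-1}\to 0$ in $\mu_{\delta}$-measure. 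Zweim\"{u}ller's criterion for strong distributional convergence then promotes the limit law from $\mu_{\delta}$ to every probability measure absolutely continuous with respect to $\mu_{\delta}$, hence --- $\mu_{\delta}$ and $m_{\delta}$ being equivalent --- to every $\nu\ll m_{\delta}$. The main obstacle is the sharp tail estimate: one must carry the value of $\Phi$ at the parabolic fixed points and the cusp widths through the global measure formula accurately enough to obtain the exact coefficient $\kappa(G)$, including the normalising factor $\mu_{\delta}(\{X_{1}=1\})$, rather than merely the order of growth; a secondary point requiring care is $D'(u_{n})$, where, because the sets $\{X_{1}>u_{n}\}$ concentrate near the parabolic fixed points --- themselves fixed points of the dynamics --- the maximality of the block decomposition must genuinely be used to exclude clustering.
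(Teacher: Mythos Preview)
Your overall architecture is the same as the paper's: obtain a sharp tail asymptotic, verify Leadbetter's conditions $D(u_n)$ and $D'(u_n)$ for a suitable stationary version of the block process, apply Proposition~\ref{prop:Leadbetter}, and then upgrade to all $\nu\ll m_\delta$ via asymptotic invariance of $Y_n^{2\delta-1}/n$ together with a strong--distributional--convergence criterion (the paper cites Aaronson where you cite Zweim\"uller). The handling of $D'(u_n)$ and of the final transfer step is essentially as in the paper.

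There is, however, a genuine gap at the foundation. You assert that $\mu_\delta$ is the $T$-invariant \emph{probability} measure equivalent to $m_\delta$ and that the jump map $\widehat{T}=T^{R}$ preserves it. By Proposition~\ref{prop2} the measure $\mu_\delta$ is \emph{infinite} $\sigma$-finite; the density $\Phi$ is unbounded near the parabolic fixed points and is not $m_\delta$-integrable. The clause ``since $R\circ T=R-1$ on $\{R>1\}$, the map $\widehat{T}$ preserves $\mu_\delta$'' is not a valid argument, and your tail claim is off by one order: using $T$-invariance one gets $\mu_\delta(\{X_1=n\})-\mu_\delta(\{X_1=n+1\})=\mu_\delta(\{X_1=1,\,X_2=n\})$, so telescoping against Lemma~\ref{prop:asymptotic-n-excursion} yields $\mu_\delta(\{X_1=n\})=\mu_\delta(\{X_1=1,\,X_2\ge n\})\asymp n^{1-2\delta}$, not $n^{-2\delta}$; in particular $\mu_\delta(\{X_1>n\})=\infty$ for every $n$ when $\delta\le 1$. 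This is exactly why the paper does \emph{not} work with $\widehat{T}$ on the whole limit set but instead induces on $\mathcal{D}=\{X_1=1\}$: there $\mu_{\delta,\mathcal{D}}=\mu_\delta|_{\mathcal{D}}$ is finite, the first-return map $T_{\mathcal{D}}$ preserves it, and by Lemma~\ref{lem:Xi-vs-returntime} the process $(X_{k+1})_{k\ge 1}$ is stationary under the normalised measure $\tilde\mu_{\delta,\mathcal{D}}$, with the correct tail $\tilde\mu_{\delta,\mathcal{D}}(\{X_2>n\})\sim \kappa(G)\,n^{1-2\delta}$ coming directly from Lemma~\ref{prop:asymptotic-n-excursion}. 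Once you move to this induced system your outline goes through and coincides with the paper's proof.
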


Let us remark that the above constant $\kappa(G)$ is strictly positive, since by a result of Beardon \cite{Beardon2} we have that if $G$ has
parabolic elements, then $\delta(G) > 1/2$.
Also, note that Theorem \ref{thm} gives an answer to a question asked by Pollicott
in \cite{MR1500147}, where he shows that a result by Galambos
\cite{MR0299576,MR0352028} can be rephrased in terms of the modular
group. The reader might like to recall that this result of Galambos
states that for all $s>0$ and for each probability measure $\nu$
absolutely continuous with respect to the Lebesgue measure on $\left(0,1\right)$,
we have that
$
\lim_{n\rightarrow\infty}\nu\left(\left\{ (\max_{k=1,\dots,n}a_{k})/n\le s\right\} \right)=\exp\left(-1/(s\log2)\right).
$
Here,  $a_{k}(x)$
refers to the $k$-th entry in the regular continued fraction
expansion of $x\in\left(0,1\right)$. Let us remark that a straightforward
adaptation of our proof of  Theorem \ref{thm} gives an alternative proof
of this result of Galambos. Moreover, note that Dolgopyat  studied statistical properties of the geodesic flow on negatively curved cusped surfaces 
of finite area. He shows that if these surfaces have constant negative curvature, then the maximal cuspidal excursions  satisfy a  Fr\'echet law 
with respect to Lebesgue  measure (see \cite[Section 4.2, Corollary 3]{Dol}). 

We will also give some interesting applications of the result in
Theorem \ref{thm}. These include the following theorem, whose  first
assertion extends a result of Philipp in \cite[Theorem 1]{MR0387226}, who showed that for Lebesgue almost every $x \in (0,1)$ we have that $\liminf_{n\rightarrow\infty}\,  \max_{k=1,...,n} a_{k}(x) (\log\log n)/n= 1/\log 2$.
This settled a conjecture by Erd{\H o}s  (see \cite{MR0387226}), who had previously conjectured that  the above {\em Limes inferior} is equal to $1$.  Also, note that
the second assertion of  {Theorem \ref{prop:loglogliminf} } extends  \cite[Corollary  to Theorem 3]{MR0387226}.

\begin{thm}[Erd{\H o}s-Philipp law for maximal cuspidal windings]
\label{prop:loglogliminf}For  $G$ as above, we have $m_{\delta}$
almost everywhere that
\[
\liminf_{n\rightarrow\infty}\, Y_{n}^{2\delta-1} (\log\log n)/n=\kappa\left(G\right).
\]
Moreover, for each  sequence $\left(\ell_{n}\right)$ of
positive reals we have $m_{\delta}$
almost everywhere that
\[
\limsup_{n\rightarrow\infty}Y_{n}/\ell_{n}\in\left\{ 0,\infty\right\} .
\]

\end{thm}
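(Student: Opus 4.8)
\emph{Proof strategy for Theorem~\ref{prop:loglogliminf}.} I would deduce both assertions from Theorem~\ref{thm} together with the uniform, ``at the edge'' refinement of the extreme--value estimate that underlies its proof: the Gibbs--Markov property of the Bowen--Series coding yields, as $m\to\infty$,
\[
\mu_{\delta}\bigl(\{Y_{m}\le t\}\bigr)=\exp\bigl(-\kappa(G)\,m\,t^{-(2\delta-1)}\,(1+o(1))\bigr)
\]
uniformly for $t\asymp\bigl(m/\log\log m\bigr)^{1/(2\delta-1)}$ --- the constant being pinned down by consistency with the Fr\'echet limit of Theorem~\ref{thm} --- together with the regular variation of index $-(2\delta-1)$ of the tail $\mu_{\delta}(\{X_{1}>t\})$, and a quantitative decoupling of truncated maxima over disjoint blocks of indices. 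Since $\Phi=d\mu_{\delta}/dm_{\delta}$ is bounded above and below, $\mu_{\delta}$ and $m_{\delta}$ are equivalent, so all almost--everywhere statements may be proved for the Bowen--Series--invariant measure $\mu_{\delta}$; and because under the scalings $n^{-1}$, respectively $\ell_{n}\to\infty$, a perturbation of finitely many $X_{k}$ washes out, each of the two random variables in the statement is, modulo $\mu_{\delta}$--null sets, measurable with respect to $\bigcap_{K}\sigma(X_{K},X_{K+1},\dots)$ and hence, by mixing, $\mu_{\delta}$--almost surely constant; it thus suffices to identify that constant.

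For the first assertion set $Z_{n}:=Y_{n}^{2\delta-1}(\log\log n)/n$, so that the displayed estimate gives $\mu_{\delta}(\{Z_{n}\le c\})=(\log n)^{-\kappa(G)/c\,(1+o(1))}$ for each $c>0$. To obtain $\liminf_{n}Z_{n}\ge\kappa(G)$ I would fix $c<\kappa(G)$ and the slowly growing subsequence $m_{j}:=\lfloor\e^{\,j/\log j}\rfloor$, for which $m_{j+1}/m_{j}\to1$ while $\sum_{j}\mu_{\delta}(\{Z_{m_{j}}\le c\})=\sum_{j}(j/\log j)^{-\kappa(G)/c\,(1+o(1))}<\infty$; the first Borel--Cantelli lemma gives $Z_{m_{j}}>c$ eventually, and since $Y_{n}$ is non-decreasing and $\log\log n$ slowly varying one has $Z_{n}\ge(m_{j}/m_{j+1})Z_{m_{j}}$ for $m_{j}\le n<m_{j+1}$, whence $\liminf_{n}Z_{n}\ge c$; letting $c\uparrow\kappa(G)$ finishes this half. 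To obtain $\liminf_{n}Z_{n}\le\kappa(G)$ I would fix $c>\kappa(G)$, take the dyadic subsequence $n_{k}:=2^{k}$ and the events $E_{k}:=\{Y_{n_{k}}\le t_{n_{k}}\}$ with $t_{m}:=(cm/\log\log m)^{1/(2\delta-1)}$: since $\kappa(G)/c<1$ we have $\sum_{k}\mu_{\delta}(E_{k})=\infty$, and as $t_{m}$ is eventually increasing one may write $E_{j}\cap E_{k}=E_{j}\cap\{\max_{n_{j}<i\le n_{k}}X_{i}\le t_{n_{k}}\}$ and use the Gibbs--Markov decoupling to bound $\mu_{\delta}(E_{j}\cap E_{k})$ by a fixed multiple of $\mu_{\delta}(E_{j})\,\e^{-(n_{k}-n_{j})\mu_{\delta}(\{X_{1}>t_{n_{k}}\})(1+o(1))}$; feeding this into the Kochen--Stone form of the second Borel--Cantelli lemma and checking that the resulting double sum over $j,k\le N$ matches $\bigl(\sum_{k\le N}\mu_{\delta}(E_{k})\bigr)^{2}$ in order of magnitude yields $\mu_{\delta}(\limsup_{k}E_{k})>0$, hence $\liminf_{n}Z_{n}\le c$ on a set of positive --- and therefore, by the zero--one law above, full --- measure; letting $c\downarrow\kappa(G)$ completes the proof.

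For the second assertion I would first establish the one--coordinate dichotomy: $\limsup_{n}X_{n}/\ell_{n}\in\{0,\infty\}$ $\mu_{\delta}$--almost surely, equal to $\infty$ precisely when $\sum_{n}\ell_{n}^{-(2\delta-1)}=\infty$. This follows from the regular variation of the tail of $X_{1}$ and the two Borel--Cantelli lemmas --- convergence directly, divergence via Gibbs--Markov quasi--independence of the events $\{X_{n}\ge a\ell_{n}\}$ --- together with the elementary fact that $\sum_{n}(a\ell_{n})^{-(2\delta-1)}$ is finite for all $a>0$ or for none. To pass to $Y_{n}$, put $\ell_{n}^{\ast}:=\inf_{m\ge n}\ell_{m}$. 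If $\sum_{n}\ell_{n}^{-(2\delta-1)}=\infty$ then $\limsup_{n}Y_{n}/\ell_{n}\ge\limsup_{n}X_{n}/\ell_{n}=\infty$. Otherwise $\ell_{n}\to\infty$, so $\ell_{n}^{\ast}$ increases to $\infty$, and since $\ell^{\ast}$ is non-decreasing and $Y_{n}$ is the running maximum one checks --- splitting according to whether the argmax indices remain bounded or tend to infinity --- that $\limsup_{n}Y_{n}/\ell_{n}^{\ast}=\limsup_{n}X_{n}/\ell_{n}^{\ast}\in\{0,\infty\}$. If this common value is $0$ then $\limsup_{n}Y_{n}/\ell_{n}\le\limsup_{n}Y_{n}/\ell_{n}^{\ast}=0$; if it is $\infty$, pick $n(k)\to\infty$ with $X_{n(k)}\ge a\,\ell_{n(k)}^{\ast}$ and $m(k)\ge n(k)$ with $\ell_{m(k)}\le2\ell_{n(k)}^{\ast}$, so that $Y_{m(k)}\ge X_{n(k)}\ge(a/2)\,\ell_{m(k)}$ for the infinitely many distinct $m(k)$ and $\limsup_{n}Y_{n}/\ell_{n}=\infty$. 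In all cases the limsup lies in $\{0,\infty\}$.

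The genuine obstacle is the divergence half of the first assertion: it requires both the uniform moderate--deviation estimate for $\mu_{\delta}(\{Y_{m}\le t\})$ in the range $t\asymp(m/\log\log m)^{1/(2\delta-1)}$ and the quantitative decoupling of the events $E_{k}$ across well--separated dyadic scales, and both must be extracted from the Gibbs--Markov property (exponential mixing against H\"older observables together with bounded distortion) of the Bowen--Series map established in the proof of Theorem~\ref{thm}. The remaining ingredients --- the subsequence choices, the block interpolation, the zero--one law, and the passage between $m_{\delta}$ and $\mu_{\delta}$ --- are routine.
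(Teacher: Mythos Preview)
Your framework rests on a false premise: you assert that $\Phi=d\mu_{\delta}/dm_{\delta}$ is bounded above and below and hence that $\mu_{\delta}$ and $m_{\delta}$ are equivalent probability measures. But Proposition~\ref{prop2} states explicitly that $\mu_{\delta}$ is an \emph{infinite} $\sigma$-finite measure, so $\Phi$ is unbounded (it blows up at the parabolic fixed points, as one sees from~\eqref{eq:eigenfunction-of-PF} since for $\xi\in a_{\gamma}$ approaching $p_{\gamma}$ the integration region $\partial\X\setminus a_{\gamma}$ contains points of $a_{\gamma^{-1}}$ arbitrarily close to $\xi$). Consequently your displayed estimate $\mu_{\delta}(\{Y_{m}\le t\})=\exp(-\kappa(G)\,m\,t^{-(2\delta-1)}(1+o(1)))$ is not even well posed, the tail $\mu_{\delta}(\{X_{1}>t\})$ is not the object controlled by Lemma~\ref{prop:asymptotic-n-excursion}, and none of your Borel--Cantelli arguments can be run against $\mu_{\delta}$. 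Moreover the process $(X_{k})$ is \emph{not} stationary under $T$ with respect to any probability measure absolutely continuous to $m_{\delta}$; what is stationary is $(X_{k+1})_{k\ge1}=(\rho\circ T_{\mathcal{D}}^{k-1})_{k\ge1}$ under the \emph{induced} probability $\tilde{\mu}_{\delta,\mathcal{D}}$ on $\mathcal{D}=\{X_{1}=1\}$ (Lemma~\ref{lem:Xi-vs-returntime}). The paper therefore works entirely on the induced system $(\mathcal{D},T_{\mathcal{D}},\tilde{\mu}_{\delta,\mathcal{D}})$ and passes back to $m_{\delta}$ not via bounded density but via the $\Gamma$-invariance of the limit sets together with Lemma~\ref{trans}. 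Your tail zero--one law via $\bigcap_{K}\sigma(X_{K},X_{K+1},\dots)$ likewise needs a genuine probability-preserving mixing system to bite, which again forces you onto the induced system.

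Once you transplant your argument to $(\mathcal{D},T_{\mathcal{D}},\tilde{\mu}_{\delta,\mathcal{D}})$, the strategy becomes viable and is broadly parallel to the paper's, with two differences worth noting. For $\liminf\le\kappa(G)$ you propose Kochen--Stone on the nested events $\{Y_{2^{k}}\le t_{2^{k}}\}$, which requires a quantitative two-block decoupling estimate; the paper instead exploits stationarity of the induced process to shift to the disjoint windows $E_{k}=\{\tilde{L}(k^{2k},k^{2(k+1)})\le\tilde{\psi}(k^{2(k+1)})\}$, which lie in well-separated coordinate blocks so that continued fraction mixing (Proposition~\ref{prop:inducedsystem-cf-mixing}) feeds directly into the ordinary second Borel--Cantelli lemma, and then removes the prefix via the summable events $F_{k}$. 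This avoids Kochen--Stone entirely and is cleaner. For the second assertion, your reduction via $\ell_{n}^{\ast}=\inf_{m\ge n}\ell_{m}$ is a legitimate way to handle non-monotone $(\ell_{n})$; the paper simply invokes the Limsup--Max Principle~\eqref{eq:maximum-exercise} after establishing the dichotomy for $(X_{n})$.
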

Theorem \ref{prop:loglogliminf} has some further interesting consequences. Namely, it
permits the derivation of the following Khintchine-type results, where $\xi_{t}$
denotes the unique point on the hyperbolic ray from $i\in\X$ towards
an element $\xi\in L_{r}\left(G\right)$ such that $d\left(i,\xi_{t}\right)=t$,
for $t\ge0$.
\begin{cor}
\label{cor:log-khintchine}For $G$ as above, we have for $m_{\delta}$
almost every $\xi\in L_{r}\left(G\right)$ that

\[
\lim_{n\rightarrow\infty}\frac{\log Y_{n}\left(\xi\right)}{\log n}=\frac{1}{2\delta-1}\,\mbox{ and }\,\lim_{T\rightarrow\infty}\max_{0\le t\le T}\frac{d\left(\xi_{t},G\left(i\right)\right)}{\log T}=\frac{1}{2\delta-1}.
\]

\end{cor}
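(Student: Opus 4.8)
\medskip
\noindent\emph{Proof proposal.} The plan is to derive both equalities from a single combinatorial statement together with the dictionary between the geodesic flow and the block coding set up in Section 2. The combinatorial input is that, $m_{\delta}$ almost everywhere,
\[
\lim_{n\to\infty}\frac{\log Y_{n}}{\log n}=\frac{1}{2\delta-1}.
\]
Granting this, the first equality is immediate; for the second one rewrites $\max_{0\le t\le T}d(\xi_{t},G(i))$ in terms of $Y_{n(T)}$, where for $\xi\in L_{r}(G)$ we write $n(T)$ for the number of blocks of the word expansion of $\xi$ already completed at the point $\xi_{T}$ of the ray $s_{\xi}$.

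For the displayed limit, the lower bound $\liminf_{n}\log Y_{n}/\log n\ge 1/(2\delta-1)$ is read off directly from the Erd{\H o}s-Philipp law in Theorem \ref{prop:loglogliminf}: since $\kappa(G)>0$, for $m_{\delta}$ almost every $\xi$ one has $Y_{n}^{2\delta-1}\ge\frac{1}{2}\kappa(G)\,n/\log\log n$ for all large $n$, whence, after taking logarithms and dividing by $\log n$, the claim follows (and in particular $Y_{n}\to\infty$ almost everywhere). For the matching upper bound I would invoke the tail estimate for a single block, $m_{\delta}(\{X_{k}>t\})\asymp t^{-(2\delta-1)}$ uniformly in $k$, which is one of the Gibbs-Markov/conformal estimates underlying the Fr\'{e}chet law and is contained in Lemma \ref{prop:asymptotic-n-excursion} and its surrounding discussion. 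Fixing $\var>0$, the quantities $m_{\delta}(\{X_{k}>k^{1/(2\delta-1)+\var}\})\asymp k^{-(1+\var(2\delta-1))}$ are summable in $k$, so by the convergence half of the Borel--Cantelli lemma $X_{k}\le k^{1/(2\delta-1)+\var}$ for all large $k$, hence $Y_{n}=\max_{k\le n}X_{k}\le n^{1/(2\delta-1)+\var}$ for all large $n$, $m_{\delta}$ almost everywhere; letting $\var\downarrow0$ along a countable sequence gives $\limsup_{n}\log Y_{n}/\log n\le 1/(2\delta-1)$. (Alternatively this upper bound can be obtained from Theorem \ref{thm} applied along a lacunary subsequence $n_{j}=\lceil\theta^{j}\rceil$ together with Borel--Cantelli and monotonicity of $(Y_{n})$, or by feeding the Borel--Cantelli estimate above into the zero--one law of Theorem \ref{prop:loglogliminf}.)

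It remains to pass from $Y_{n}$ to the geometric quantities. By the coding recalled in the Introduction and analysed in Section 2, a block of length $m$ attached to a parabolic generator $\gamma\in\Gamma_{0}$ is an excursion during which the projection of $s_{\xi}$ onto $\X/G$ spirals $m-1$ times around the cusp of width $w_{\gamma}$, and such an excursion attains maximal distance $\log m+\log w_{\gamma}+O(1)$ from the orbit $G(i)$, whereas outside cuspidal excursions $d(\xi_{t},G(i))=O(1)$. Since there are only finitely many cusps and $Y_{n}\to\infty$ almost everywhere, it follows that $\max_{0\le t\le T}d(\xi_{t},G(i))=\log Y_{n(T)}+O(1)$ for all large $T$. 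Finally one compares $\log T$ with $\log n(T)$: the time needed to traverse the $k$-th block is $\le C(1+\log X_{k})$, so summing over $k\le n(T)$ and using $Y_{n}\le n^{1/(2\delta-1)+\var}$ gives $T\le n(T)^{1+o(1)}$, while the number of tiles of the Dirichlet domain crossed by time $T$ is comparable to $T$ (by ergodicity of the geodesic flow with respect to the finite Bowen--Margulis--Sullivan measure together with Kac's formula) and is at least $n(T)$, so $T\ge c\,n(T)$ for a constant $c>0$; hence $\log n(T)/\log T\to 1$, and therefore
\[
\frac{\max_{0\le t\le T}d(\xi_{t},G(i))}{\log T}=\frac{\log Y_{n(T)}+O(1)}{\log n(T)}\cdot\frac{\log n(T)}{\log T}\ \longrightarrow\ \frac{1}{2\delta-1}.
\]

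The routine part is the combinatorial limit, which is essentially Theorem \ref{prop:loglogliminf} plus a one-line Borel--Cantelli estimate; it also explains why a genuine limit (and not merely a $\limsup$) is obtained, namely because taking the running maximum $Y_{n}$ erases the oscillation of the individual blocks $X_{k}$. The main obstacle is the geometric translation in the last paragraph, and within it the verification that blocks are completed at a non-degenerate rate, so that $\log T\sim\log n(T)$: the upper comparison $T\le n(T)^{1+o(1)}$ is immediate, but the lower comparison $T\ge c\,n(T)$ must preclude the geodesic racing through infinitely many tiles in bounded time, which is most cleanly handled through ergodicity of the geodesic flow rather than by a direct inspection of the fundamental domain. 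The identity $\max_{0\le t\le T}d(\xi_{t},G(i))=\log Y_{n(T)}+O(1)$ is the second essential input and is precisely what the $n$-excursion asymptotics of Lemma \ref{prop:asymptotic-n-excursion} provide.
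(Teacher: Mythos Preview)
Your derivation of $\lim_{n}\log Y_{n}/\log n=1/(2\delta-1)$ is essentially the paper's: lower bound from Theorem~\ref{prop:loglogliminf}, upper bound by Borel--Cantelli on the tail of $X_{k}$ (the paper cites Proposition~\ref{prop:measure} for the tail and then the Limsup-Max Principle~(\ref{eq:maximum-exercise}) to pass from $X_{n}$ to $Y_{n}$, which is the same argument you give).

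The geometric translation, however, contains a concrete error in the lower comparison $T\ge c\,n(T)$. You assert that the number of tiles crossed by time $T$ is comparable to $T$, but this is false: during a parabolic block with $X_{k}=m$ the geodesic crosses $m$ tiles in time $\asymp\log m$, so the ratio of tile count to elapsed time is unbounded along deep excursions and no two-sided comparison can hold. The appeal to Kac's formula does not rescue this, because the transverse measure on the Bowen--Series (tile-crossing) cross-section is $\mu_{\delta}$, which is infinite by Proposition~\ref{prop2}. The paper instead applies the Birkhoff ergodic theorem to the \emph{induced} system $(\mathcal{D},T_{\mathcal{D}},\tilde\mu_{\delta,\mathcal{D}})$: writing $I^{*}(\xi)$ for the hyperbolic time needed to complete the first block of $\xi$ and $t_{n}:=\sum_{k=0}^{n-1}I^{*}\circ T_{\mathcal{D}}^{k}$, one checks that $\int I^{*}\,d\tilde\mu_{\delta,\mathcal{D}}\le C\sum_{m\ge 1}m^{-2\delta}\log m<\infty$, so $t_{n}/n$ converges to a finite positive constant and $\log T/\log n(T)\to 1$ follows in one stroke. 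This is precisely Kac plus ergodicity, but for the block-completion cross-section (where the transverse measure $\mu_{\delta,\mathcal{D}}$ is finite) rather than the tile-crossing one. A cruder patch, sufficient for your conclusion, is simply to observe that every block---hyperbolic or parabolic---takes time bounded below by some $c_{0}>0$ depending only on $F$, which gives $T\ge c_{0}\,n(T)$ without any ergodic theory.
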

Note that  in here the
second assertion  represents a significant strengthening  of the result  that
$\limsup_{t\rightarrow\infty}d\left(\xi_{t},G\left(i\right)\right) /\log t=(2\delta-1)^{-1}$ for $m_{\delta}$ almost everywhere $\xi$, which  was obtained in \cite{MR1327939}
for arbitrary geometrically finite Kleinian groups with parabolic
elements, generalising work of Sullivan for cofinite Kleinian
groups \cite{MR688349}. Let us point out that in this result  the {\em Limes superior} cannot be replaced
by a  {\em Limes inferior}. Also,  let us remark that the first statement in the above corollary is closely related to the well-known result by Khintchine
 for continued
fractions (\cite{khinchin-continuedfractionsMR0161833}), which asserts that Lebesgue almost everywhere, we have that $\limsup_{n \to \infty}\log a_{n}/\log n=1$.
In fact, by using the Limsup-Max Principle stated in (\ref{eq:maximum-exercise}), it immediately follows from Corollary \ref{cor:log-khintchine}  that for   essentially free, finitely generated Fuchsian groups with parabolic elements we have $m_{\delta}$ almost everywhere that
 $\limsup_{n\rightarrow\infty} \log X_{n}/ \log n=(2\delta-1)^{-1}$.
Again, let us remark that in here  the {\em Limes superior} can not be replaced
by a  {\em Limes inferior}.

 Further, we point out that the above mentioned results by Galambos, Philipp and  Dolgopyat exclusively
  concern dynamical systems for which the limit set is either the unit interval or the whole boundary of hyperbolic space, and in these situations the relevant measure 
  is  equal to the Lebesgue measure. In contrast to this, the conformal dynamical systems which we consider have limit sets which are of fractal nature.
Hence,  one of the novelties of our analysis is that we obtain strong distributional convergence and asymptotic estimates
for processes which are defined on conformal attractors with parabolic elements and of Hausdorff dimension strictly  less than $1$.

Finally, let us also remark that our results have their natural home in
 Extreme Value Theory, which  studies distributional properties
of the sequence of maxima  $\max\left(Z_{1},\dots,Z_{n}\right)$  for
some stationary process $(Z_{k})$. For independent
and identically distributed processes, classical Extreme Value Theory
asserts that there are only three types of non-trivial limiting distributions possible. Namely,  one of the
following three distributions has to occur, where $\alpha>0$ denotes the shape parameter and $\beta>0$ the scale parameter:
\begin{itemize}
\item[] {\it Gumbel extreme value distribution:} \, \, \, \, $ \e^{-\e^{-y/\beta}}$ \,\,\,\,\,\, \,\, for \, \, \, $y\in\R$.
\item[] {\it Fr\'{e}chet extreme value distribution:} \, \,
$
\left\{ \begin{array}{ccc}
0 & \text{    \;    for } &y\leq0\\
 \e^{-(\beta/y)^{\alpha}} & \text{   \,\,     for } &y>0
. \end{array}\right.$
\item[] {\it Weibull extreme value distribution:} \, \,
$
\left\{ \begin{array}{ccc}
\e^{-(-y/\beta)^{ \alpha}} & \text{ for } &  y\leq0\\
1 & \text{ for } &y>0. \end{array}\right.$
\end{itemize}
This result goes back to Fisher and Tippett (\cite{54.0560.05}) and it
was independently proven and extended by Gnedenko (\cite{MR0008655},
see also \cite{MR0096342}). Also, the Fr\'{e}chet distribution was
first obtained in \cite{54.0562.06}.  Conditions under which the same
classification of limiting distributions holds for dependent random
variables have been considered by various authors (see for instance
\cite{MR0065122,MR0161365,MR0176530,MR0362465,MR528323}).
Very recently, Extreme Value Theory has also been under investigation in the context
of dynamical systems (see for instance \cite{MR1340561,MR1827111,MR2422964,MR2437222,MR2639719,MR2643710,MR2749711,MR2669635,GuptaHollanderNicol2011}).
In \cite{MR1973926}  error bounds on
the distributional convergence and certain refinements are investigated
in the general framework of parabolic Markov fibered systems.  In these studies, the authors
also consider the Jacobi-Perron algorithm and derive asymptotic estimates for
its rare events with respect to the underlying invariant measure.  These results are very much in the spirit of the outcomes of Theorem \ref{prop:loglogliminf}. Therefore, in the terminology of  Extreme Value Theory, we can  now say that Theorem \ref{thm}  in particular shows that the extreme value distribution of the process $(Y_{n}^{2\delta-1}/n)$ is
Fr\'{e}chet with shape parameter equal to $1$ and  scale parameter equal to $\kappa(G)$. Note that this result complements a recent result of Ferguson
\cite[Theorem 4.5]{thesis}, who studied topologically mixing $C^{1+\alpha}$-maps
on a compact Riemannian manifold and showed that the extreme value distribution of the process  obtained from the closest visit of orbits to some arbitrarily
chosen point is Gumble with respect to the invariant probability measure absolutely continuous with respect to the Lebesgue measure.

The paper is organised as follows. In Section 2 we collect the necessary
preliminaries of Ergodic Theory for Fuchsian groups and then give
a brief discussion of a result from Extreme Value Theory which is relevant for this
paper. Finally, the proofs of Theorem \ref{thm}, Theorem
\ref{prop:loglogliminf} and Corollary \ref{cor:log-khintchine} are given in Section 3.

\begin{rem}  It seems very likely that one can adapt   
 the methods of this paper in order to establish similar 
results in more general settings. For instance, this should be possible for more general geometrically finite Kleinian groups with parabolic elements of different ranks as well as  for Hadamard manifolds with cusps, possibly under some mild assumptions on the
growth rate of their associated maximal parabolic subgroups. However, for ease of exposition and to guarantee that the methods we use remain sufficiently transparent,  we restrict the discussion in this paper to 
essentially free, finitely generated Fuchsian group with
parabolic elements.
\end{rem}

\section{Preliminaries}
In this section we discuss some results from Ergodic Theory for
Fuchsian groups with parabolic elements and from Extreme Value Theory.
In Section 2.1 we give a brief account of the way the action of $G$ on the boundary
$\partial\X$ of $\X$ can be represented by a canonical Markov map,
often  referred to as the Bowen-Series map. In Section 2.2 we recall
some basics from Patterson measure theory. This will  include a discussion of  the measure which is invariant under
the Bowen-Series map and which is absolutely continuous with respect to the Patterson measure. Here, we also derive two lemmata
which will be required in the proofs of the main results. In
Section 2.3 we summarise  strong mixing properties for
the induced dynamics of the Bowen-Series map on the complement of
some neighbourhood of the parabolic fixed points of $G$. Further, we give two lemmata which
will turn out to be useful in Section 3. Finally,
in Section 2.4, we recall a result from Extreme Value Theory obtained in
\cite{MR691492}. This result will be crucial in the proof of
Theorem \ref{thm}.

\subsection{The Canonical Markov Map}
As already mentioned in the introduction, throughout this paper we exclusively
consider a finitely generated, essentially free Fuchsian group $G$ with parabolic elements.
By definition (see \cite{MR2041265}), a group of this type can be
written as a free product $G=H\ast\Gamma$, where $H=\langle h_{1},h_{1}^{-1}\rangle\ast\ldots\ast\langle h_{u},h_{u}^{-1}\rangle$
denotes the free product of finitely many elementary hyperbolic subgroups of $G$,
and $\Gamma=\langle\gamma_{1},\gamma_{1}^{-1}\rangle\ast\ldots\ast \langle\gamma_{v},\gamma_{v}^{-1}\rangle$ denotes the free
product of finitely many elementary parabolic subgroups of $G$. Also, we let $H_{0}:= \{h_{1},h_{1}^{-1}, ..., h_{u},h_{u}^{-1}\}$ and $\Gamma_{0}:=
\{\gamma_{1},\gamma_{1}^{-1}, ... ,\gamma_{v},\gamma_{v}^{-1}\} $, so that $G_{0}=H_{0} \cup \Gamma_{0}$ is the symmetric set of all generators of $G$. Note that for each $\gamma \in \Gamma_{0}$ we have that  $\langle \gamma,\gamma^{-1}\rangle\cong{\Bbb Z}$ and that
 there exists a unique fixed point $p_{\gamma} \in \partial \X$ such that
$\gamma^{\pm 1}(p_{\gamma})=p_{\gamma}$. Without loss of generality, we  always
assume that $ \{\infty\} \in \partial \X$ is not contained in the limit set $L(G)$ of $G$. Also, note that, by construction, these Fuchsian
groups do not have any relations. Moreover, we can
assume that $G$ admits the choice of a Dirichlet fundamental domain $F$ at $i$, which is bounded within $\X$ by
a finite set $\mathcal{F}$ of sides. Let us now first recall from
\cite{MR2158407} the construction of the relevant coding map $T$
associated with $G$, which maps the radial limit set $L_{r}(G)$
into itself. This construction parallels the construction of the well-known
Bowen-Series map (see \cite{MR556585}, \cite{MR2098779}). For $\xi,\eta\in L_{r}(G)$,
let $\ell_{\xi,\eta}:\R\to\X$ denote the directed geodesic from $\eta$
to $\xi$ such that $\ell_{\xi,\eta}$ intersects the closure $\overline{F}$
of $F$ in $\X$, and normalised such that $\ell_{\xi,\eta}(0)$ is
the summit of $\ell_{\xi,\eta}$. We then define the exit time $e_{\xi,\eta}$  by
\[
e_{\xi,\eta}:=\sup\left\{ s:\ell_{\xi,\eta}(s)\in\overline{F}\right\} .
\]
 Since $\xi,\eta\in L_{r}(G)$, we clearly have that $|e_{\xi,\eta}|<\infty$.
By Poincar\'e's polyhedron theorem (see \cite{MR1279064}), we have
that the set $\mathcal{F}$ carries an involution $\iota:\mathcal{F}\to\mathcal{F}$
such that for each $s\in\mathcal{F}$
there is a unique {side-pairing} transformation $g_{s}\in G_{0}$ for which $g_{s}(\overline{F})\cap\overline{F}=\iota(s)$. We then let
\[
\mathcal{L}_{r}(G):=\{(\xi,\eta):\xi,\eta\in L_{r}(G),\xi\neq\eta\mbox{ and there exists } t\in\R \mbox{ such that }\ell_{\xi,\eta}(t)\in\overline{F}\},
\]
 and define the map $S:\mathcal{L}_{r}(G)\to\mathcal{L}_{r}(G)$,
for all $(\xi,\eta)\in\mathcal{L}_{r}(G)$ such that $\ell_{\xi,\eta}(e_{\xi,\eta})\in s$,
for some $s\in\mathcal{F}$, by
\[
S(\xi,\eta):=(g_{s}(\xi),g_{s}(\eta)).
\]
 In order to show that the map $S$ admits a Markov partition, we
introduce the following collection of subsets of the boundary $\partial\X$
of $\X$. For $s\in\mathcal{F}$, let $A_{g_{s}}$ refer to the open hyperbolic
half-space for which $F\subset\X\setminus A_{g_{s}}$ and $s\subset\partial A_{g_{s}}$.
Moreover,  let $\Pi:\X\to\partial\X$ denote the shadow-projection
given by $\Pi(E):=\{\xi\in\partial\X:s_{\xi}\cap E\neq\emptyset\}$, for $E \subset \X$ and  where $s_{\xi}$ denotes the hyperbolic ray from $i$ towards $\xi\in \partial\X$.
We then define the projection $a_{g_{s}}$ of the side $s$ to $\partial\X$
by
\[
a_{g_{s}}:=\mbox{Int}\left(\Pi(A_{g_{s}})\right).
\]
 Clearly, we have that $a_{g_{s}}\cap a_{g_{t}}=\emptyset$, for all $s,t\in\mathcal{F},s\neq t$.
Hence, by convexity of $F$, we have that $\ell_{\xi,\eta}(e_{\xi,\eta})\in s$
if and only if $\xi\in a_{g_{s}}$. In other words, $S(\xi,\eta)=(g_{s}\xi,g_{s}\eta)$
for all $\xi\in a_{g_{s}}$. This immediately gives that the projection
map $\pi:(\xi,\eta)\mapsto\xi$ onto the first coordinate of $\mathcal{L}_{r}(G)$
leads to a canonical factor $T$ of $S$, that is, we obtain the map
\[
T:L_{r}(G)\to L_{r}(G),\mbox{ given by }T\arrowvert_{a_{g_{s}}\cap L_{r}(G)} :=g_{s}.
\]
 Obviously, $T$ satisfies $\pi\circ S=T\circ\pi$. Since
  $T(a_{g_{s}})=g_{s}(a_{g_{s}})=\mbox{Int}(\partial\X\setminus a_{\iota(s)})$,
it follows that $T$ is a non-invertible Markov map with respect to
the partition $\{a_{g_{s}}\cap L_{r}(G):s\in\mathcal{F}\}$. For this
so-obtained coding map $T$ we then have the following result.
\begin{prop}[{{\cite[Proposition 2, Proposition 3]{MR2158407}}}] The map $T$
is a topologically mixing Markov map with respect to the partition
generated by $\alpha:=\{a_{g}\cap L_{r}(G): g \in G_{0}\}$. Moreover,
the map $S$ is the natural extension of $T$.
\end{prop}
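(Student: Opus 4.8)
The plan is to prove the three assertions separately: that $T$ is Markov with respect to $\alpha$, that $T$ is topologically mixing, and that $S$ is the natural extension of $T$; the first is essentially a restatement of the construction above. For each $s\in\mathcal{F}$ we already have $T(a_{g_s})=g_s(a_{g_s})=\Int(\partial\X\setminus a_{\iota(s)})$, so the $T$-image of every member of $\alpha$ is exactly the union of all the other members --- this is the Markov property. Reading off the admissible transitions, a symbol $g\in G_0$ can be followed by $h\in G_0$ precisely when $a_h\cap L_r(G)\subseteq T(a_g\cap L_r(G))=\Int(\partial\X\setminus a_{\iota(g)})\cap L_r(G)$, and, since the $a_h$ are open and pairwise disjoint, this holds if and only if $h\neq\iota(g)$; so the underlying subshift of finite type has transition matrix $A=\bigl(\1[h\neq\iota(g)]\bigr)_{g,h\in G_0}$.

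\emph{Topological mixing.} I would first record that the groups in question are non-elementary (otherwise $L_r(G)=\emptyset$ and there is nothing to prove), so that $\card G_0=2(u+v)\ge 4$. Iterating the displayed formula, for every $g\in G_0$,
\[
T^2\bigl(a_g\cap L_r(G)\bigr)=\bigcup_{h\neq\iota(g)}\bigl(\Int(\partial\X\setminus a_{\iota(h)})\cap L_r(G)\bigr)=L_r(G),
\]
the last equality because for any prescribed $k\in G_0$ one can pick $h\notin\{\iota(g),\iota(k)\}$ (possible as $\card G_0\ge 3$), whence $a_k\cap L_r(G)\subseteq\Int(\partial\X\setminus a_{\iota(h)})$; equivalently $A^2$ has all entries $\ge\card G_0-2>0$, so $A$ is primitive. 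Since the cylinders $[g_0,\dots,g_{n-1}]\cap L_r(G)$ are relatively open in $L_r(G)$ and shrink in diameter to $0$ (expansivity of $T$ on the radial limit set), they form a basis for the topology of $L_r(G)$; combining this with $T^2(a_g\cap L_r(G))=L_r(G)$ and with the surjectivity of $T$ then shows that every nonempty relatively open $U\subseteq L_r(G)$ satisfies $T^k(U)=L_r(G)$ for all large $k$, which is more than enough for topological mixing.

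\emph{$S$ as the natural extension.} The construction already gives the intertwining $\pi\circ S=T\circ\pi$, so $\pi$ is a factor map from $S$ onto $T$. Next, $S$ is invertible, with $S^{-1}(\xi,\eta)=(g_s\xi,g_s\eta)$ where $s\in\mathcal{F}$ is the unique side with $\eta\in a_{g_s}$ --- this is the Bowen--Series step applied to the time-reversed geodesic $\ell_{\eta,\xi}$. The key observation is then that the sequence $(g_{s_n})_{n\ge1}$ of side-pairing generators applied along the backward $S$-orbit of $(\xi,\eta)$ is exactly the forward $T$-itinerary of $\eta$ (indeed $\eta\in a_{g_{s_1}}$, $T\eta=g_{s_1}\eta\in a_{g_{s_2}}$, and so on). Hence $\pi^{-1}\alpha$ fixes the $T$-itinerary of $\xi$, while the refining partitions $S^{-n}\pi^{-1}\alpha$, $n\ge1$, progressively fix the $T$-itinerary of $\eta$; together these separate the points of $\mathcal{L}_r(G)$, since two radial limit points with identical $T$-itineraries coincide (shrinking cylinders once more). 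Combined with the invertibility of $S$ and with $\pi\circ S=T\circ\pi$, this is precisely the statement that $S$ is (isomorphic to) the natural extension $\widehat{T}$ of $T$; equivalently, that $(\xi,\eta)\mapsto\bigl(\pi(S^{-n}(\xi,\eta))\bigr)_{n\ge0}$ is a bijection of $\mathcal{L}_r(G)$ onto $\varprojlim(L_r(G),T)$ conjugating $S$ to the shift.

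The step I expect to be the main obstacle is exactly this last point separation, i.e.\ the bijectivity just stated. One must control the topologically thin, $m_\delta$-null set of boundary points at which the coding degenerates --- points on the common boundary of two cylinders, and points whose $T$-itinerary is eventually the constant string of a parabolic generator --- and one must check that every admissible backward itinerary is realised by a \emph{radial} limit point and not by a parabolic fixed point. This is where the restriction to $L_r(G)$ (rather than to all of $L(G)$) and the precise definition of $\mathcal{L}_r(G)$ come into play; everything else reduces to the combinatorics of $\alpha$ together with the formula $T(a_g)=\Int(\partial\X\setminus a_{\iota(g)})$.
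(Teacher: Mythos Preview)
The paper does not prove this proposition at all: it is quoted verbatim from \cite[Propositions~2 and~3]{MR2158407} and no argument is given here. So there is no ``paper's own proof'' to compare your attempt against; what you have written is a sketch of what the proof in \cite{MR2158407} must look like, and in broad outline it is the right sketch.

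A couple of small points. Your sentence ``the $T$-image of every member of $\alpha$ is exactly the union of all the other members'' is not quite right: $T(a_{g})=\Int(\partial\X\setminus a_{g^{-1}})$ is the union of all $a_{h}$ with $h\neq g^{-1}$, and since $g\neq g^{-1}$ in a free product of infinite cyclic groups, this union \emph{includes} $a_{g}$ itself. Your subsequent identification of the transition rule $g\to h\iff h\neq g^{-1}$ is correct, so this is only a slip of phrasing. Also, you write $\iota(g)$ throughout, but $\iota$ is an involution on sides, not on generators; what you mean is $g_{\iota(s)}=g_{s}^{-1}$, i.e.\ $\iota(g)=g^{-1}$ under the obvious identification. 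With that understood, your mixing argument via primitivity of $A$ is correct and your computation of $S^{-1}$ (Bowen--Series step on the \emph{second} coordinate, coming from the entry side of $\ell_{\xi,\eta}$ into $\overline{F}$) is exactly the mechanism behind the natural-extension statement.

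You correctly flag the genuine content: the bijectivity of $(\xi,\eta)\mapsto\bigl(\pi(S^{-n}(\xi,\eta))\bigr)_{n\ge0}$ onto the inverse limit. Injectivity follows from shrinking cylinders as you say; surjectivity requires showing that every admissible bi-infinite itinerary is realised by a pair of \emph{radial} (not parabolic) limit points whose geodesic meets $\overline{F}$, and that the measure-zero ambiguities at cylinder boundaries do not obstruct this. That is precisely what is carried out in \cite{MR2158407}, and it is the only place where real work beyond the combinatorics of $\alpha$ is needed.
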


\subsection{Patterson Measure Theory}

\noindent In order to introduce the $T$-invariant measure $\mu_{\delta}$ supported on the limit set $L(G)$ of $G$ which will be relevant for our purposes, we first briefly recall some of the highlights
in connection with the Patterson measure and the Patterson-Sullivan
measure (for more detailed discussions of these measures we refer to \cite{MR0450547,MR556586,MR634434,MR1041575,MR1327939}).
By now it is folklore that, for a fixed given sequence $(s_{n})$ of positive
reals which tends to $\delta$ from above,  a Patterson measure is given by some weak limit of the sequence of probability measures

\[
\left(\left(\sum_{g\in G}\exp(-s_{n}d(i,g(i)))\right)^{-1}\sum_{g\in G} \exp\left(-s_{n}d(i,g(i)))\ddelta_{g(i)}\right)\right),
\]
where $\ddelta_{x}$ refers to the Dirac point measure supported on  $x$.
Clearly, such a Patterson measure is always
 a probability measure which is supported on $L(G)$.
For finitely generated, essentially free Fuchsian groups it is well known
that a so-obtained limit measure is unique and non-atomic, and does not depend
on the particular choice of the sequence $(s_{n})$.  In the sequel, this unique Patterson measure will always be denoted by $m_{\delta}$.  It is well-known that $m_{\delta}$ has the property of being $\delta$-conformal, that is, for all $g\in G$
and $\xi\in L(G)$, we have that
\begin{equation}
\frac{d\,(m_{\delta}\circ g)}{d\, m_{\delta}}(\xi)=\left|g'\left(\xi\right)\right|^{\delta}.\label{eq:delta-conformality}
\end{equation}
 The $\delta$-conformality of  $m_{\delta}$ is one of the key properties of the Patterson measure
and  we will now briefly recall a
very convenient geometrisation of this property. For this, fix some $r_{0}>0$ and let $B(\xi_{t})\subset\X$
denote the hyperbolic disc centered at $\xi_{t}$ of hyperbolic radius
$r_{0}$. We then have the following generalisation of Sullivan's shadow lemma.
\begin{prop}[{{\cite{MR766265,MR1327939}}}] \label{prop:measure}  There exists a constant $C>0$ such that for each $\xi\in L(G)$ and $t\geq0$,
we have that
\[
C^{-1} \exp \left((1-\delta)d(\xi_{t},G(i)) -\delta t \right) \leq m_{\delta}(\Pi(B(\xi_{t}))) \leq  C \exp\left((1-\delta)d(\xi_{t},G(i)) -\delta t \right).
\]

\end{prop}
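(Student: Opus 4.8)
The plan is to derive this \emph{global measure formula} of Stratmann and Velani by combining the $\delta$-conformality~(\ref{eq:delta-conformality}) of $m_{\delta}$ with a careful choice of uniformising element of $G$. Write $\rho:=d(\xi_{t},G(i))$ and choose $g=g(\xi,t)\in G$ realising $d(\xi_{t},g(i))=\rho$; then $|d(i,g(i))-t|\le\rho$ by the triangle inequality, and by~(\ref{eq:delta-conformality}),
\[
m_{\delta}\big(\Pi(B(\xi_{t}))\big)=\int_{g^{-1}(\Pi(B(\xi_{t})))}\big|g'\big|^{\delta}\,dm_{\delta},
\]
so the estimate is transported to one for the $m_{\delta}$-mass of a shadow in a standard position. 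One then distinguishes the \emph{thick} case, where $\xi_{t}$ lies within a bounded hyperbolic distance of $G(i)$ (so $\rho=O(1)$), from the \emph{cuspidal} case, where the geodesic ray $s_{\xi}$ has entered a horoball based at a parabolic fixed point $p$ and $\xi_{t}$ sits deep inside it ($\rho$ large).

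In the thick case $d(i,g(i))=t+O(1)$, and both exponentials in the assertion collapse to $\e^{-\delta t}$ up to multiplicative constants: here $g^{-1}(\Pi(B(\xi_{t})))$ is, up to bounded distortion, the shadow from $i$ of a hyperbolic ball of radius $\asymp r_{0}$ centred within a bounded distance of $i$, standard distortion estimates for M\"obius maps give $|g'(\eta)|\asymp\e^{-d(i,g(i))}\asymp\e^{-t}$ over the relevant shadow, and thus $m_{\delta}(\Pi(B(\xi_{t})))\asymp\e^{-\delta t}\,m_{\delta}\big(g^{-1}(\Pi(B(\xi_{t})))\big)$. The last mass is at most $1$ and bounded below uniformly: such a shadow is relatively open and non-empty in $L(G)$, hence of positive $m_{\delta}$-mass since $m_{\delta}$ is non-atomic with full support $L(G)$; and since $G$ is geometrically finite (so that the non-cuspidal part of the convex core of $\X/G$ is compact and $L(G)$ is a bounded subset of $\partial\X$), the configurations occurring here range over a compact family on which the $m_{\delta}$-mass of the resulting shadow is positive and lower-semicontinuous. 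This settles the thick case and in particular recovers the classical Sullivan shadow lemma.

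In the cuspidal case one needs, in addition, the local scaling of the Patterson measure at parabolic fixed points, which is where the exponent $1-\delta$ originates. In a chart in which $p$ is finite the $k$-th power of the parabolic generator $\gamma$ fixing $p$ has boundary derivative $(ck(x-p)+1)^{-2}$; hence, for a fundamental set $U$ of $\langle\gamma\rangle$ on $\partial\X\setminus\{p\}$ (necessarily with $m_{\delta}(U\cap L(G))>0$), the $\delta$-conformality yields $m_{\delta}(\gamma^{k}(U))\asymp|k|^{-2\delta}m_{\delta}(U)$. Since $\bigcup_{|k|\ge n}\gamma^{k}(U)$ sweeps out a Euclidean neighbourhood of $p$ of radius $\asymp 1/n$, summing the geometric series $\sum_{|k|\ge n}|k|^{-2\delta}\asymp n^{1-2\delta}$ --- finite precisely because $\delta>1/2$ whenever $G$ has parabolic elements --- gives $m_{\delta}(B(p,r))\asymp r^{2\delta-1}$ for small $r$, and the same scaling at every $G$-translate of $p$. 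One then combines this with the geometry of the horoball: a geodesic segment reaching hyperbolic depth $\rho$ into a standard cusp at such a point has endpoints (and hence the shadow from $i$ of an $r_{0}$-ball at its deepest point) squeezed into a Euclidean ball of radius $\asymp\e^{-\rho}$ about the parabolic point, whereas $\Pi(B(\xi_{t}))$ itself is a shadow of Euclidean diameter $\asymp\e^{-t}$; transporting by $g^{-1}$ into such a small ball, where the pulled-back integrand is controlled and the scaling $r^{2\delta-1}$ applies, assembles the contributions $\e^{-\delta(t-\rho)}$ and $\e^{-(2\delta-1)\rho}$ into $\e^{(1-\delta)\rho-\delta t}=\e^{(1-\delta)d(\xi_{t},G(i))-\delta t}$.

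The hard part, which I expect to be the main obstacle, is exactly this last combination in the cuspidal case: one must track uniformly how the penetration depth $\rho$ and the complementary length $t-\rho$ jointly fix the position and Euclidean size of $\Pi(B(\xi_{t}))$ relative to the parabolic point, and in particular control the variation of $|g'|$ over the transported shadow, since the pole $g^{-1}(\infty)$ of $g$ drifts toward the parabolic point as the relevant power of $\gamma$ grows. It is precisely the non-uniform distortion near the parabolic point, quantified by $m_{\delta}(B(p,r))\asymp r^{2\delta-1}$, that turns the naive factor $\e^{-\delta\rho}$ into $\e^{-(2\delta-1)\rho}$ and so produces the exponent $1-\delta$ on $d(\xi_{t},G(i))$. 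Carrying out this bookkeeping with constants depending only on $G$ and $r_{0}$ --- as done in~\cite{MR766265,MR1327939} --- completes the proof.
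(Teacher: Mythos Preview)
The paper does not give its own proof of this proposition: it is stated with the citation \cite{MR766265,MR1327939} and used as a black box, so there is nothing in the paper to compare your argument against. What you have written is a faithful outline of the Stratmann--Velani proof in \cite{MR1327939} (building on Sullivan's shadow lemma \cite{MR766265}): the thick/thin dichotomy, the use of $\delta$-conformality to transport the shadow to a standard position, and the parabolic local scaling $m_{\delta}(B(p,r))\asymp r^{2\delta-1}$ combining with the horoball geometry to produce the exponent $1-\delta$. As a sketch it is accurate; if you intend it as more than a roadmap, the genuinely delicate step you flag yourself---uniform control of $|g'|$ over the transported shadow when the pole of $g$ approaches the parabolic point---is indeed where the work lies, and for a complete argument you would need to reproduce the bounded-distortion estimates of \cite{MR1327939} rather than defer to them.
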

\noindent We continue our quick journey through Patterson measure theory by noting that $m_{\delta} $ gives rise to a measure $\tilde{m}_{\delta}$
on $(L(G)\times L(G))\setminus\{\mbox{diagonal}\}$, which is ergodic
with respect to the action of $G$ on $(L(G)\times L(G))\setminus\{\mbox{diagonal}\}$, where we recall that this action is
given by $g((\xi,\eta))=(g(\xi),g(\eta))$. The measure $\tilde{m}_{\delta}$ is usually
referred to as the Patterson-Sullivan measure and it is defined by
\[
d\tilde{m}_{\delta}(\xi,\eta):=\frac{dm_{\delta}(\xi)dm_{\delta}(\eta)}{|\xi-\eta|^{2\delta}}.
\]
The (first) marginal measure of the Patterson-Sullivan measure, restricted
to the set
\[
\mathcal{L}(G):=\{(\xi,\eta):\xi,\eta\in L(G),\xi\neq\eta\mbox{ and there exists } t\in\R\ \mbox{ such that } \ell_{\xi,\eta}(t)\in\overline{F}\},
\]
then defines the measure $\mu_{\delta}$ on $L(G)$, which is given by
\[
\mu_{\delta}:=\tilde{m}_{\delta}\big|_{\mathcal{L}\left(G\right)}\circ\pi^{-1}.
\]
For the system $\left(L_{r}(G),T,\mu_{\delta}\right)$
the following result has been obtained in \cite{MR2158407}.
\begin{prop}[{{\cite{MR2158407}}}]
\label{prop2} The map $T$ is measure preserving, conservative and
ergodic with respect to the infinite, $\sigma$-finite measure $\mu_{\delta}$.
\end{prop}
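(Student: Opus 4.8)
The plan is to obtain all four assertions for $(L_{r}(G),T,\mu_{\delta})$ from the corresponding properties of the $G$-action on $(L(G)\times L(G))\setminus\{\mbox{diagonal}\}$ equipped with $\tilde{m}_{\delta}$, transferring them along the factor map $\pi$ by means of the facts (from the preceding proposition) that $S$ is the natural extension of $T$, that $\pi\circ S=T\circ\pi$, and that $\mu_{\delta}=\tilde{m}_{\delta}|_{\mathcal{L}(G)}\circ\pi^{-1}$. First I would verify that $S$ preserves $\tilde{m}_{\delta}|_{\mathcal{L}(G)}$: as a natural extension $S$ is invertible modulo null sets, and on each atom $\{(\xi,\eta)\in\mathcal{L}_{r}(G):\xi\in a_{g_{s}}\}$, $s\in\mathcal{F}$, of its defining partition it acts as the single element $g_{s}\in G_{0}$, so $S$ is a mod-null bijection which is piecewise an application of elements of $G$ and therefore preserves the $G$-invariant measure $\tilde{m}_{\delta}$; pushing forward by $\pi$ then shows that $T$ preserves $\mu_{\delta}$. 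The measure $\mu_{\delta}$ is $\sigma$-finite since $\tilde{m}_{\delta}$ is: its density $|\xi-\eta|^{-2\delta}$ with respect to the finite product measure $m_{\delta}\times m_{\delta}$ is bounded, hence integrable, on every set $\{|\xi-\eta|\ge\epsilon\}$, $\epsilon>0$. That $\mu_{\delta}$ is infinite reflects the presence of the cusps: each parabolic fixed point $p_{\gamma}$, $\gamma\in\Gamma_{0}$, is an indifferent fixed point of $T$, and the version $\h$ of $d\mu_{\delta}/dm_{\delta}$ singled out in (\ref{eq:eigenfunction-of-PF}) has an $m_{\delta}$-non-integrable singularity there, which one reads off from Proposition~\ref{prop:measure} evaluated along the geodesic ray into the cusp associated with $\gamma$ (in complete analogy with $dx/x$ being the infinite invariant measure of the Farey map).

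For ergodicity it suffices, again because $S$ is the natural extension of $T$, to show that $S$ is ergodic with respect to $\tilde{m}_{\delta}|_{\mathcal{L}(G)}$. The key point is that for a geodesic $\ell_{\xi,\eta}$ meeting $\overline{F}$ the $G$-translates of $\ell_{\xi,\eta}$ that again meet $\overline{F}$ are exactly its forward and backward $S$-iterates, because advancing, respectively retreating, by one tile re-centres the coding on the next $G$-translate of $\overline{F}$ traversed by the geodesic, and for these groups the relevant stabilisers are trivial at radial endpoints. Consequently, if $A\subset\mathcal{L}(G)$ is $S$-invariant, its $G$-saturation $\bigcup_{g\in G}g(A)$ is $G$-invariant and coincides with $A$ on $\mathcal{L}(G)$ up to $\tilde{m}_{\delta}$-null sets; by ergodicity of the $G$-action on $(L(G)\times L(G))\setminus\{\mbox{diagonal}\}$ with respect to $\tilde{m}_{\delta}$ this saturation, and hence $A$, is null or co-null. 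Thus $S$, and therefore $T$, is ergodic.

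Conservativity is the delicate assertion, since for an infinite-measure-preserving transformation ergodicity alone does not entail it. I would delete from $L_{r}(G)$ small neighbourhoods of the parabolic fixed points $p_{\gamma}$, $\gamma\in\Gamma_{0}$, to obtain a set $K$ with $\mu_{\delta}(K)<\infty$ on which the induced (first-return) transformation of $T$ is the finite-measure-preserving Gibbs--Markov system recalled in Section 2.3, and then prove that $\mu_{\delta}$-almost every $\xi$ satisfies $T^{n}\xi\in K$ for infinitely many $n$. Granting this, conservativity is immediate: letting $\widehat K$ be the lift of $K$ to $\mathcal{L}(G)$ and writing the dissipative part of $S$ as the disjoint union of the bilateral iterates of a wandering set, the total number of visits of a dissipative orbit to $\widehat K$ is $\tilde{m}_{\delta}$-integrable, hence almost surely finite, which contradicts the recurrence just stated; so the dissipative parts of $S$ and of $T$ are $\mu_{\delta}$-null. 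The substantial ingredient, and the main obstacle, is exactly this recurrence: the $\xi$ whose $T$-orbit ultimately avoids $K$ are those coded by words that are eventually deep in the cusps, i.e.\ are built from arbitrarily long parabolic blocks only, and this is a $\mu_{\delta}$-null event --- when $G$ has a hyperbolic generator because a coding with only finitely many hyperbolic symbols lies in a countable union of $G$-translates of the limit set of the parabolic free product $\Gamma$, which is $m_{\delta}$-null as $\Gamma$ has exponent of convergence strictly below $\delta$, and in the purely parabolic case because the ergodicity of the jump transformation at block boundaries makes blocks of length $1$ recur with positive frequency, hence infinitely often, almost surely. A more conceptual alternative is to recall that finitely generated Fuchsian groups with parabolic elements are of divergence type and to invoke the Hopf--Tsuji--Sullivan dichotomy, which yields conservativity of the $G$-action on $(L(G)\times L(G))\setminus\{\mbox{diagonal}\}$ --- equivalently of the geodesic flow --- directly, and hence of its factor $T$. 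Together with the ergodicity established above, this completes the proof.
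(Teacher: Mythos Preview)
The paper does not give a proof of this proposition: it is stated as a direct citation of \cite{MR2158407} (Stadlbauer--Stratmann), with no argument supplied in the present paper. So there is no ``paper's own proof'' to compare your proposal against.

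For what it is worth, your outline follows the architecture of the original source: one deduces $T$-invariance of $\mu_{\delta}$ from $G$-invariance of $\tilde{m}_{\delta}$ via the natural extension $S$; ergodicity of $T$ from ergodicity of the $G$-action on the double limit set, using that $S$-orbits in $\mathcal{L}(G)$ coincide with $G$-orbits of geodesics meeting $\overline{F}$; and infiniteness from the blow-up of $\Phi$ at the parabolic fixed points. For conservativity, the Hopf--Tsuji--Sullivan route you mention at the end is the clean and standard argument. Your first, more hands-on approach via recurrence to $K=\mathcal{D}$ is also workable, since the only orbits that never enter $\{X_{1}=1\}$ are those eventually coded by a single repeated parabolic letter, i.e.\ pre-images of parabolic fixed points, a countable and hence $m_{\delta}$-null set; but your phrasing ``ergodicity of the jump transformation at block boundaries makes blocks of length $1$ recur'' is awkward, because one must first establish almost-sure finiteness of the return time before the induced transformation is even defined --- invoking its ergodicity at that stage reads as circular. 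Once finiteness of $\rho$ is secured directly as above, Poincar\'e recurrence for the finite-measure induced system does the rest.
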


We end this section by giving two slightly technical observations which will be required in the proofs of
our main results.
In order to state these, we define the function $\h:L(G) \to \R$ by
\begin{equation}
\h\left(\xi\right):=\int\1_{\mathcal{L}(G)}\left(\xi,\eta\right)\left|\xi-\eta\right|^{-2\delta}dm_{\delta}(\eta).\label{eq:eigenfunction-of-PF}
\end{equation}
Note that $m_{\delta}$ almost everywhere we have that  $\h=d\mu_{\delta}/dm_{\delta}$.
Furthermore, note that for each $g\in G_{0}$ and  $\xi\in a_{g}\setminus\left\{ p_{\gamma}:\gamma\in\Gamma_{0}\right\} $, we have that
\[
\h\left(\xi\right)=\int_{\partial\X\setminus a_{g}}\left|\xi-\eta\right|^{-2\delta}dm_{\delta}(\eta),
\]
whereas, for $\gamma\in\Gamma_{0}$ one immediately verifies that
\[
\h\left(p_{\gamma}\right)=\int_{\partial\X\setminus(a_{\gamma}\,\cup\, a_{\gamma^{-1}})}\left|p_{\gamma}-\eta\right|^{-2\delta}dm_{\delta}(\eta).
\]
Let us also remark that for  $g\in H_{0}$ and $\xi\in a_{g}$, we obviously have that
\[
\lim_{\omega\rightarrow\xi}\sup_{\eta\in\partial\X\setminus a_{g}}\left\{ \left|\left|\omega-\eta\right|^{-2\delta}-\left|\xi-\eta\right|^{-2\delta}\right|\right\} =0.
\]
Using this, one then immediately sees,  that in this case the restriction $\h\big|_{a_{g}}$ of $\h$ to $a_{g}$ is
a continuous function which is bounded away
from zero and infinity.

\vspace{2mm}

Let us now come to the first of our technical observations. This observation will be required for the explicit computation of the constant $\kappa(G)$
in Theorem \ref{thm}.
\begin{lem}
\label{lem:fixpoint-of-perron-frobenius}For each $\gamma\in\Gamma_{0}$,
we have that
\[
\sum_{g\in G_{0}\setminus\left\{ \gamma^{\pm1}\right\} }\left|g'\left(p_{\gamma}\right)\right|^{\delta}\h\left(g\left(p_{\gamma}\right)\right)=\h\left(p_{\gamma}\right).
\]
\end{lem}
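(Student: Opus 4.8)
The plan is to prove the identity by a direct change of variables, computing in effect the value at $p_{\gamma}$ of a truncated transfer operator applied to $\h$ and then matching the result against the two integral representations of $\h$ recorded just above the lemma. Throughout I would regard $m_{\delta}$ as a measure on $\partial\X$ carried by $L(G)$ and read all set identities modulo $m_{\delta}$-null sets; in particular, since $m_{\delta}$ charges neither $L(G)\setminus L_{r}(G)$ nor the countably many endpoints of the arcs $a_{h}$, one has $\bigcup_{h\in G_{0}}a_{h}=\partial\X$ modulo $m_{\delta}$.

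First I would, for fixed $\gamma\in\Gamma_{0}$ and each $g=g_{s}\in G_{0}\setminus\{\gamma^{\pm1}\}$, locate the point $g(p_{\gamma})$ in the partition $\{a_{h}\}$. Since the side-pairing $g_{s}$ carries $s$ onto $\iota(s)$ and $\overline{F}$ onto the tile adjacent to $F$ across $\iota(s)$, the tile $g(F)$ lies in the half-space $A_{g^{-1}}$ whose bounding geodesic carries $\iota(s)$, so the ideal vertex $g(p_{\gamma})$ of $g(F)$ lies in $\overline{a_{g^{-1}}}$; and as $p_{\gamma}$ is the ideal vertex of $F$ at which the two sides paired by $\gamma^{\pm1}$ meet, neither of which is $s$ because $g\ne\gamma^{\pm1}$, the point $g(p_{\gamma})$ is not an endpoint of $\iota(s)$, hence $g(p_{\gamma})\in a_{g^{-1}}$. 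I would also check that $g(p_{\gamma})$ is not one of the distinguished parabolic fixed points $p_{\gamma'}$, $\gamma'\in\Gamma_{0}$: otherwise $g^{-1}\langle\gamma'\rangle g=\langle\gamma\rangle$, each side being the maximal parabolic subgroup fixing the respective point, and since $\langle\gamma\rangle$ and $\langle\gamma'\rangle$ are free factors of $G=\langle h_{1}\rangle\ast\cdots\ast\langle h_{u}\rangle\ast\langle\gamma_{1}\rangle\ast\cdots\ast\langle\gamma_{v}\rangle$, the standard conjugacy properties of free factors in a free product force $\langle\gamma'\rangle=\langle\gamma\rangle$ and $g\in\langle\gamma\rangle$, i.e. $g\in\{\gamma^{\pm1}\}$, a contradiction. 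Therefore the representation $\h(\xi)=\int_{\partial\X\setminus a_{g^{-1}}}|\xi-\eta|^{-2\delta}\,dm_{\delta}(\eta)$, valid for $\xi\in a_{g^{-1}}\setminus\{p_{\gamma'}:\gamma'\in\Gamma_{0}\}$, applies at $\xi=g(p_{\gamma})$.

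Next I would substitute $\eta=g(\zeta)$ in that integral. By the Markov property $T|_{a_{g}}=g$ with $T(a_{g})=\Int(\partial\X\setminus a_{g^{-1}})$, the map $g$ is a bijection of $a_{g}$ onto $\partial\X\setminus a_{g^{-1}}$ modulo $m_{\delta}$, so $\zeta$ ranges over $a_{g}$; using $\delta$-conformality $(\ref{eq:delta-conformality})$ in the form $dm_{\delta}(g\zeta)=|g'(\zeta)|^{\delta}\,dm_{\delta}(\zeta)$ together with the elementary M\"obius identity $|g(p_{\gamma})-g(\zeta)|=|g'(p_{\gamma})|^{1/2}|g'(\zeta)|^{1/2}\,|p_{\gamma}-\zeta|$, all Jacobian factors cancel and one is left with
\[
|g'(p_{\gamma})|^{\delta}\,\h\!\left(g(p_{\gamma})\right)=\int_{a_{g}}|p_{\gamma}-\zeta|^{-2\delta}\,dm_{\delta}(\zeta).
\]
Summing this over $g\in G_{0}\setminus\{\gamma^{\pm1}\}$, using that the arcs $a_{g}$ are pairwise disjoint with $\bigcup_{g\in G_{0}\setminus\{\gamma^{\pm1}\}}a_{g}=\partial\X\setminus(a_{\gamma}\cup a_{\gamma^{-1}})$ modulo $m_{\delta}$, and interchanging sum and integral (all integrands being non-negative), one obtains
\[
\sum_{g\in G_{0}\setminus\{\gamma^{\pm1}\}}|g'(p_{\gamma})|^{\delta}\,\h\!\left(g(p_{\gamma})\right)=\int_{\partial\X\setminus(a_{\gamma}\cup a_{\gamma^{-1}})}|p_{\gamma}-\zeta|^{-2\delta}\,dm_{\delta}(\zeta)=\h(p_{\gamma}),
\]
the last equality being exactly the integral representation of $\h(p_{\gamma})$ recorded above; everything is finite since $\h(p_{\gamma})<\infty$, which in turn uses $\delta>1/2$.

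The step I expect to be the real obstacle is the first one: establishing that $g(p_{\gamma})\in a_{g^{-1}}$ and that it avoids all the distinguished parabolic fixed points. This is precisely what pins down the index set $G_{0}\setminus\{\gamma^{\pm1}\}$ and what makes the cancelled Jacobians reassemble into the clean disjoint union used in the summation; the analytic content of the change of variables is then routine. Alternatively, the same computation can be phrased measure-theoretically through the $G$-invariance of the Patterson--Sullivan measure $\tilde{m}_{\delta}$ and the definition $\mu_{\delta}=\tilde{m}_{\delta}|_{\mathcal{L}(G)}\circ\pi^{-1}$, which may be the most economical route to a full write-up.
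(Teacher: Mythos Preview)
Your proof is correct and follows essentially the same route as the paper's: expand $\Phi(g(p_\gamma))$ via the integral representation, apply the M\"obius cross-ratio identity together with $\delta$-conformality to change variables, and sum over $g\in G_0\setminus\{\gamma^{\pm1}\}$ so that the disjoint arcs reassemble into $\partial\X\setminus(a_\gamma\cup a_{\gamma^{-1}})$. Your version is in fact more careful about the geometric step of locating $g(p_\gamma)\in a_{g^{-1}}$ and excluding the parabolic fixed points, which the paper handles only implicitly.
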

\begin{proof}
First note that, by definition of $\h$,
 we have for each $\gamma\in\Gamma_{0}$  that
\[
\sum_{g\in G_{0}\setminus\left\{ \gamma^{\pm1}\right\} }\left|g'\left(p_{\gamma}\right)\right|^{\delta}\h\left(g\left(p_{\gamma}\right)\right)=\sum_{g\in G_{0}\setminus\left\{ \gamma^{\pm1}\right\} }\left|g'\left(p_{\gamma}\right)\right|^{\delta}\int_{\partial\X\setminus a_{g}}\left|g\left(p_{\gamma}\right)-\eta\right|^{-2\delta}dm_{\delta}\left(\eta\right).
\]
Also, recall that $\left|g\left(\xi_{1}\right)-g\left(\xi_{2}\right)\right|^{2}=\left|g'\left(\xi_{1}\right)\right|\left|g'\left(\xi_{2}\right)\right|\left|\xi_{1}-\xi_{2}\right|^{2}$, for each M\"obius transformation $g$ and for all $\xi_{1},\xi_{2}\in\R$ (see for instance \cite{Beardon}).
By setting  $\xi_{1}:=p_{\gamma}$ and $\xi_{2}:=g^{-1}\left(\eta\right)$ and using the fact that by the chain rule we have
that $g'\left(g^{-1}\left(\eta\right)\right)\left(g^{-1}\right)'\left(\eta\right)=1$,
it follows that
\[
\left|g\left(p_{\gamma}\right)-\eta\right|^{2}=\left|g'\left(p_{\gamma}\right)\right|\left|\left(g^{-1}\right)'\left(\eta\right)\right|^{-1}\left|p_{\gamma}-g^{-1}\left(\eta\right)\right|^{2}.
\]
By combining these two observations and using the $\delta$-conformality of $m_{\delta}$, we obtain that
\[  \sum_{g\in G_{0}\setminus\left\{ \gamma^{\pm1}\right\} }\left|g'\left(p_{\gamma}\right)\right|^{\delta}\h\left(g\left(p_{\gamma}\right)\right)
  =  \sum_{g\in G_{0}\setminus\left\{ \gamma^{\pm1}\right\} }\int_{\partial\X\setminus a_{g}}\frac{\left|\left(g^{-1}\right)'\left(\eta\right)\right|^{\delta}}{\left|p_{\gamma}-g^{-1}\left(\eta\right)\right|^{2\delta}}dm_{\delta}\left(\eta\right).
\]
\begin{eqnarray*}
 & = & \sum_{g\in G_{0}\setminus\left\{ \gamma^{\pm1}\right\} }\int_{\partial\X\setminus a_{g}}\left|\left(g^{-1}\right)'\left(\eta\right)\right|^{\delta}\left|p_{\gamma}-g^{-1}\left(\eta\right)\right|^{-2\delta}dm_{\delta}\left(\eta\right)\\
 & = & \sum_{g\in G_{0}\setminus\left\{ \gamma^{\pm1}\right\} }\int_{\partial\X\setminus a_{g}}\left|p_{\gamma}-g^{-1}\left(\eta\right)\right|^{-2\delta}d\left(m_{\delta}\circ g^{-1}\right)\left(\eta\right)\\
 & = & \sum_{g\in G_{0}\setminus\left\{ \gamma^{\pm1}\right\} }\int\1_{\partial\X\setminus a_{g}}\left(g\left(\eta\right)\right)\left|p_{\gamma}-\eta\right|^{-2\delta}dm_{\delta}\left(\eta\right)\\
 & = & \sum_{g\in G_{0}\setminus\left\{ \gamma^{\pm1}\right\} }\int_{a_{g^{-1}}}\left|p_{\gamma}-\eta\right|^{-2\delta}dm_{\delta}\left(\eta\right)=\h\left(p_{\gamma}\right).
\end{eqnarray*}

\end{proof}

Our second technical observation  gives an  asymptotic
estimate for the measure  $\mu_{\delta}$ of
the event $\left\{X_{1}=1, X_{2}=n\right\} $.  In here,  we let $w_{\gamma}$ refer to the width of the cusp associated with $\gamma \in \Gamma_{0}$, that is,  $w_{\gamma}$ is the unique real number such that $\gamma$
can be written as $
\gamma=\tau_{\gamma}^{-1}\circ\ \sigma_{\gamma}\circ\tau_{\gamma}
$, for $\tau_{\gamma}$ and $\sigma_{\gamma}$ given by
$\tau_{\gamma}(\xi):=\xi-p_{\gamma}$ and $\sigma_{\gamma}(\xi):=\xi\left(1+w_{\gamma}\xi\right)^{-1}$.
\begin{lem}
\label{prop:asymptotic-n-excursion}
\begin{align*}
\lim_{n\to \infty} n^{2\delta} \mu_{\delta}\left(\left\{ X_{1}=1,X_{2}=n\right\} \right) = \sum_{\gamma\in\Gamma_{0}}\left(\h\left(p_{\gamma}\right)w_{\gamma}^{-\delta}\right)^{2}.
\end{align*}
\end{lem}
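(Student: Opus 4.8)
\emph{The plan} is to split the event according to the first coding symbol $g_1$ and the parabolic generator $\gamma$ governing the second block, to straighten each resulting cylinder by the branch of $T$ that maps it onto a fixed target set, and then to combine $\delta$-conformality of $m_\delta$ with the explicit parabolic dynamics of $\gamma$. If $\xi\in L_r(G)$ has coding $(g_1,g_2,\dots)$, then $T^{k-1}\xi\in a_{g_k}$ and $T^k\xi=g_k\cdots g_1\xi$. Since a block of length $\ge 2$ consists of repetitions of a single parabolic generator, for $n\ge 2$ one obtains, up to the $m_\delta$-null set $L(G)\setminus L_r(G)$, the disjoint decomposition
\[
\{X_1=1,X_2=n\}=\bigsqcup_{\gamma\in\Gamma_0}\ \bigsqcup_{g\in G_0\setminus\{\gamma^{\pm1}\}}E_{g,\gamma,n},\qquad E_{g,\gamma,n}:=\{\xi:\xi\in a_g,\ T^j\xi\in a_\gamma\ (1\le j\le n),\ T^{n+1}\xi\notin a_\gamma\cup a_{\gamma^{-1}}\}.
\]
Here $g\notin\{\gamma^{\pm1}\}$ simultaneously encodes reducedness of the word and that the first block has length exactly one; disjointness holds because $E_{g,\gamma,n}\subset a_g\cap g^{-1}(a_\gamma)$ and the $a_h$ are pairwise disjoint. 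On $E_{g,\gamma,n}$ the cylinder relations force $T^{n+1}\xi=\gamma^n g\,\xi$, and the full-branch (Markov) property of $T$ gives $E_{g,\gamma,n}=g^{-1}\gamma^{-n}(K_\gamma)$ with $K_\gamma:=L(G)\setminus(a_\gamma\cup a_{\gamma^{-1}})$.

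Next I would change variables. Writing $d\mu_\delta=\h\,dm_\delta$, applying the $\delta$-conformality $(\ref{eq:delta-conformality})$ of $m_\delta$ to the M\"obius map $g^{-1}\gamma^{-n}\in G$, and using the conjugation $\gamma=\tau_\gamma^{-1}\sigma_\gamma\tau_\gamma$ to evaluate $(\gamma^{-n})'(\zeta)=(1-nw_\gamma(\zeta-p_\gamma))^{-2}$, I obtain
\[
n^{2\delta}\mu_\delta(E_{g,\gamma,n})=\int_{K_\gamma}\h\bigl(g^{-1}\gamma^{-n}\zeta\bigr)\,\bigl|(g^{-1})'(\gamma^{-n}\zeta)\bigr|^{\delta}\,\Bigl|\tfrac1n-w_\gamma(\zeta-p_\gamma)\Bigr|^{-2\delta}\,dm_\delta(\zeta).
\]
As $n\to\infty$ one has $\gamma^{-n}\zeta\to p_\gamma$, hence $g^{-1}\gamma^{-n}\zeta\to g^{-1}(p_\gamma)$, while $\bigl|\tfrac1n-w_\gamma(\zeta-p_\gamma)\bigr|^{-2\delta}\to|w_\gamma|^{-2\delta}|\zeta-p_\gamma|^{-2\delta}$. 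Passing to the limit under the integral (the point needing care, discussed below) and using that $\int_{K_\gamma}|\zeta-p_\gamma|^{-2\delta}\,dm_\delta(\zeta)=\h(p_\gamma)$ by the identity recorded just before Lemma \ref{lem:fixpoint-of-perron-frobenius}, this yields $\lim_{n\to\infty}n^{2\delta}\mu_\delta(E_{g,\gamma,n})=\bigl|(g^{-1})'(p_\gamma)\bigr|^{\delta}\,\h\bigl(g^{-1}(p_\gamma)\bigr)\,|w_\gamma|^{-2\delta}\,\h(p_\gamma)$.

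To conclude I would sum. Since $G_0\setminus\{\gamma^{\pm1}\}$ is symmetric, Lemma \ref{lem:fixpoint-of-perron-frobenius} gives $\sum_{g\in G_0\setminus\{\gamma^{\pm1}\}}\bigl|(g^{-1})'(p_\gamma)\bigr|^{\delta}\h\bigl(g^{-1}(p_\gamma)\bigr)=\sum_{g\in G_0\setminus\{\gamma^{\pm1}\}}|g'(p_\gamma)|^{\delta}\h(g(p_\gamma))=\h(p_\gamma)$, so
\[
\sum_{g\in G_0\setminus\{\gamma^{\pm1}\}}\lim_{n\to\infty}n^{2\delta}\mu_\delta(E_{g,\gamma,n})=\h(p_\gamma)^2\,|w_\gamma|^{-2\delta}=\bigl(\h(p_\gamma)\,w_\gamma^{-\delta}\bigr)^2.
\]
Because all index sets are finite, the limit passes through the sums over $g$ and over $\gamma\in\Gamma_0$, which proves the claim.

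The hard part is justifying the interchange of limit and integral in the second step, since $\h$ is unbounded near the parabolic fixed points. Here the hypothesis $g\ne\gamma^{\pm1}$ is essential: only the two sides of the Dirichlet domain $F$ paired by $\gamma^{\pm1}$ reach into the cusp $p_\gamma$, so $p_\gamma\notin\partial a_g$, whence $g^{-1}(p_\gamma)$ lies in the interior of $a_{g^{-1}}$, where $\h$ is continuous and bounded. Moreover $a_\gamma\cup\{p_\gamma\}\cup a_{\gamma^{-1}}$ is a neighbourhood of $p_\gamma$, so $K_\gamma$ is, off an $m_\delta$-null set, bounded away from $p_\gamma$; consequently, for all large $n$, the set $g^{-1}\gamma^{-n}(K_\gamma)$ is contained in a fixed compact subset of $a_{g^{-1}}$, $\bigl|(g^{-1})'\bigr|$ is bounded along the sequences $\gamma^{-n}\zeta$, and $\bigl|\tfrac1n-w_\gamma(\zeta-p_\gamma)\bigr|^{-2\delta}$ is uniformly bounded. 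The integrands are therefore uniformly bounded on $K_\gamma$, and since $m_\delta$ is a probability measure, dominated convergence applies.
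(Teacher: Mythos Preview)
Your argument is correct and follows essentially the paper's route: decompose by the first letter and the parabolic generator, pull back via $\delta$-conformality and the explicit form of $(\gamma^{\pm n})'$, identify the resulting integral over the target as $\Phi(p_\gamma)$, and close with Lemma~\ref{lem:fixpoint-of-perron-frobenius}; the paper merely splits one level further (over the first post-block letter $g_2$, integrating over each $a_{g_2}$) and uses uniform rather than dominated convergence, which is only a cosmetic difference. One small slip to correct: since $E_{g,\gamma,n}\subset a_g$, the limit point $g^{-1}(p_\gamma)$ and the sets $g^{-1}\gamma^{-n}(K_\gamma)$ lie in the interior of $a_g$, not of $a_{g^{-1}}$---but this does not affect your boundedness argument for $\Phi$, which goes through on compact subsets of any $a_h$.
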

\begin{proof}
First, note that for each $n\in\N$, we have that
\[
\mu_{\delta}\left(\left\{X_{1}=1, X_{2}=n\right\} \right)=\mu_{\delta}\left(\bigcup_{\gamma\in\Gamma_{0}}\;\bigcup_{g_{1},g_{2}\in G_{0}\setminus\left\{ \gamma^{\pm1}\right\}} a_{g_{1}\gamma^{n}g_{2}}\right).
\]
The aim is to show that for each $g_{1}\in G_{0}$,
$\gamma\in\Gamma_{0}\setminus\left\{ g_{1}^{\pm1}\right\} $ and $g_{2}\in G_{0}\setminus\left\{ \gamma^{\pm1}\right\} $,
we have that
\begin{equation}
\lim_{n\to \infty} n^{2\delta} \mu_{\delta}\left(a_{g_{1}\gamma^{n}g_{2}}\right)=\left|g_{1}'\left(p_{\gamma}\right)\right|^{\delta}w_{\gamma}^{-2\delta}\h\left(g_{1}\left(p_{\gamma}\right)\right)\int_{a_{g_{2}}}\left|\xi-p_{\gamma}\right|^{-2\delta}dm_{\delta}\left(\xi\right).\label{eq:tail-asymptotics}
\end{equation}
Before we prove this, let us first see why this is sufficient for the assertion in the lemma. For this, note that  (\ref{eq:tail-asymptotics}) implies that
\begin{eqnarray*}
 &  & \lim_{n\to \infty} n^{2\delta}  \mu_{\delta}\left(\left\{X_{1}=1, X_{2}=n\right\} \right)\\
 & = &\sum_{{\gamma\in\Gamma_{0}}\atop g_{1},g_{2}\in G_{0}\setminus\left\{ \gamma^{\pm1}\right\}}  \left|g_{1}'\left(p_{\gamma}\right)\right|^{\delta}w_{\gamma}^{-2\delta}\h\left(g_{1}\left(p_{\gamma}\right)\right)\int_{a_{g_{2}}}\left|\xi-p_{\gamma}\right|^{-2\delta}dm_{\delta}\left(\xi\right)\\
 & = & \sum_{\gamma\in\Gamma_{0}}w_{\gamma}^{-2\delta}\!\!\sum_{g_{1}\in G_{0}\setminus\left\{ \gamma^{\pm1}\right\} }\!\!\left|g_{1}'\left(p_{\gamma}\right)\right|^{\delta}\h\left(g_{1}\left(p_{\gamma}\right)\right)\sum_{g_{2}\in G_{0}\setminus\left\{ \gamma^{\pm1}\right\} }\int_{a_{g_{2}}}\left|\xi-p_{\gamma}\right|^{-2\delta}dm_{\delta}\left(\xi\right).
\end{eqnarray*}
Using the fact that
\[
\sum_{g_{2}\in G_{0}\setminus\left\{ \gamma^{\pm1}\right\} }\int_{a_{g_{2}}}\left|\xi-p_{\gamma}\right|^{-2\delta}dm_{\delta}\left(\xi\right)=\int_{(a_{\gamma}\,\cup\, a_{\gamma^{-1}})^{{\bf c}}}\left|\xi-p_{\gamma}\right|^{-2\delta}dm_{\delta}\left(\xi\right)=\h\left(p_{\gamma}\right),
\]
it then follows that
\[
\lim_{n\to \infty} n^{2\delta} \mu_{\delta}\left(\left\{X_{1}=1, X_{2}=n\right\} \right)= \sum_{\gamma\in\Gamma_{0}}w_{\gamma}^{-2\delta}\h\left(p_{\gamma}\right)\sum_{g_{1}\in G_{0}\setminus\left\{ \gamma^{\pm1}\right\} }\left|g_{1}'\left(p_{\gamma}\right)\right|^{\delta}\h\left(g_{1}\left(p_{\gamma}\right)\right).
\]
Finally, by combining this observation and Lemma \ref{lem:fixpoint-of-perron-frobenius},
we obtain that
\[
\lim_{n\to \infty} n^{2\delta} \mu_{\delta}\left(\left\{X_{1}=1, X_{2}=n\right\} \right)=\sum_{\gamma\in\Gamma_{0}}\left(\h\left(p_{\gamma}\right)w_{\gamma}^{-\delta}\right)^{2},
\]
which finishes the proof of the lemma.

Therefore, it only remains to verify the asymptotic estimate in (\ref{eq:tail-asymptotics}). For this, let  $g_{1}\in G_{0}$,
$\gamma\in\Gamma_{0}\setminus\left\{ g_{1}^{\pm1}\right\} $ and $g_{2}\in G_{0}\setminus\left\{ \gamma^{\pm1}\right\} $ be fixed.
Using the $\delta$-conformality of $m_{\delta}$ stated in (\ref{eq:delta-conformality}),
it follows that
\begin{eqnarray}
\mu_{\delta}\left(a_{g_{1}\gamma^{n}g_{2}}\right)  =  \int\1_{a_{g_{1}\gamma^{n}g_{2}}}\h\, dm_{\delta}=\int_{a_{g_{2}}}\left|\left(g_{1}\gamma^{n}\right)'\left(\xi\right)\right|^{\delta}\h\left(g_{1}\gamma^{n}\left(\xi\right)\right)dm_{\delta}\left(\xi\right).\label{eq:step1_deltaconformality}
\end{eqnarray}
For $\sigma_{\gamma}$ as given  in the definition of the width of the cusp associated with $\gamma$, stated prior to the lemma,  one immediately verifies that for the $n$-th iterate of $\sigma_{\gamma}$ one has that $\sigma_{\gamma}^{n}\left(\xi\right)=\xi/\left(1+nw_{\gamma}\xi\right)$
and $\left(\sigma_{\gamma}^{n}\right)'\left(\xi\right)=1/\left(1+nw_{\gamma}\xi\right)^{2}$.
Using this, it follows that for all $\xi\in\R \setminus\left\{ p_{\gamma}\right\} $,
we have that
\[
\lim_{n\rightarrow\infty}n^{2}\left(\gamma^{n}\right)'\left(\xi\right)=w_{\gamma}^{-2}\left(\xi-p_{\gamma}\right)^{-2}.
\]
By combining this with the facts that $\lim_{n\to\infty}\sup_{\xi\in a_{g_{2}}}\left\{ \left|\gamma^{n}\left(\xi\right)-p_{\gamma}\right|\right\} =0$
and that $g_{1}'$ is continuous, we conclude that
\begin{equation}
\lim_{n\rightarrow\infty}\sup_{\xi\in a_{g_{2}}}\left\{ n^{2\delta}\left|\left(g_{1}\gamma^{n}\right)'\left(\xi\right)\right|^{\delta}-\left|g_{1}'\left(p_{\gamma}\right)\right|^{\delta}w_{\gamma}^{-2\delta}\left|\xi-p_{\gamma}\right|^{-2\delta}\right\} =0.\label{eq:derivative-uniformconv}
\end{equation}
Similarly, since $\h\circ g_{1}$ is continuous in $p_{\gamma}$,
we have that
\begin{equation}
\lim_{n\rightarrow\infty}\sup_{\xi\in a_{g_{2}}}\left\{ \h\left(g_{1}\gamma^{n}\left(\xi\right)\right)-\h\left(g_{1}\left(p_{\gamma}\right)\right)\right\} =0.\label{eq:h_uniformconv}
\end{equation}
By combining the two observations in (\ref{eq:derivative-uniformconv}) and (\ref{eq:h_uniformconv}),
we obtain that
\[
\lim_{n\rightarrow\infty}\sup_{\xi\in a_{g_{2}}}\left\{ \left|n^{2\delta}\left|\left(g_{1}\gamma^{n}\right)'\left(\xi\right)\right|^{\delta}\h\left(g_{1}\gamma^{n}\left(\xi\right)\right)-\frac{\left|g_{1}'\left(p_{\gamma}\right)\right|^{\delta} \h\left(g_{1}\left(p_{\gamma}\right)\right)}{w_{\gamma}^{2\delta}\left|\xi-p_{\gamma}\right|^{2\delta}}\right|\right\} =0,
\]
which, together with (\ref{eq:step1_deltaconformality}), finishes the
proof of (\ref{eq:tail-asymptotics}).
\end{proof}
\subsection{Inducing and Strong Mixing Properties}
In this section we introduce a certain induced system associated with $\left(L_{r}(G),T,\mu_{\delta}\right)$. Subsequently, we will then show
that this system satisfies  some strong mixing properties, which will be  an important ingredient  in the  proof of
Theorem \ref{thm}. \\
Let us begin by introducing the above-mentioned induced system. For this, we define the set
$\mathcal{D} \subset L_{r}(G)$
by
\[
\mathcal{D}:=\left\{ X_{1}=1\right\}.
\]
Note that the set $\mathcal{D}$ is a Darling-Kac set for $T$ in the sense of \cite[page 123]{MR1450400}. The next step is to introduce
the induced transformation $T_{\mathcal{D}}$ with respect
to the set $\mathcal{D}$. This map is defined, for each $\xi\in\mathcal{D}$, by
\[T_{\mathcal{D}}(\xi):=T^{\rho(\xi)}(\xi),\]
 where $\rho$ denotes the return time
function, given by $\rho(\xi):=\min\{n\in\N:T^{n}(\xi)\in\mathcal{D}\}$.
This then also allows us to introduce the partition $\alpha_{\mathcal{D}}$  induced by $\alpha$ on $\mathcal{D}$, that is,
$\alpha_{\mathcal{D}}$ is given by
\[
\alpha_{\mathcal{D}}:=\bigcup_{n\in\N}\mathcal{D}\cap\left\{ \rho=n\right\} \cap\bigvee_{k=0}^{n+1}T^{-k}\left(\alpha\right).
\]
Moreover, we let $\mu_{\delta,\mathcal{D}}$ refer to the restricted measure, given
by $\mu_{\delta,\mathcal{D}}:=\mu_{\delta}\big|_{\mathcal{D}}$. The induced system is now  given by the triple   $\left(\mathcal{D},T_{\mathcal{D}},\mu_{\delta,\mathcal{D}}\right)$, and our next task will be to establish a certain mixing property for this system. To that end, let us recall
the  following definition of continued fraction mixing from \cite[page 124]{MR1450400}.
\begin{itemize}
\item {\em A dynamical system $\left(\Omega,R, \nu\right)$
is called continued fraction mixing with respect to some partition  $\alpha_{0}$ of the space $\Omega$,
if there exists $l_{1}\in\N$ and a sequence $\left(\epsilon_{n}\right)$ of positive reals which tends
 to zero for $n$ tending to infinity, such
that for each $k \in \N$,  for every $A\in\bigvee_{m=0}^{k-1}R^{-m}\left(\alpha_{0} \right)$
and for all Borel set $B\subset \Omega$, we have  that
\[
\left(1-\epsilon_{n}\right) \nu\left(A\right)\nu \left(B\right) \leq \nu \left(A\cap R^{-\left(k+n\right)}\left(B\right)\right)\le\left(1+\epsilon_{n}\right)\nu \left(A\right)\nu \left(B\right),
\]
where the first inequality has to hold for all $n\in\N$ such that $n\ge l_{1}$, whereas  the second inequality has to hold for all $n\in\N$.}
\end{itemize}

\begin{prop}
\label{prop:inducedsystem-cf-mixing}
The induced system $\left(\mathcal{D},T_{\mathcal{D}},\mu_{\delta,\mathcal{D}}\right)$
is continued fraction mixing with respect to the partition  $\alpha_{\mathcal{D}}$ induced by $\mathcal{D}$.
\end{prop}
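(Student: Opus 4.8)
The plan is to show that the induced system is a topologically mixing Gibbs--Markov map with the \emph{big image property} and then to derive continued fraction mixing from the resulting spectral gap of its transfer operator (equivalently, to invoke the general fact, recorded in \cite{MR1450400}, that such maps are continued fraction mixing). First I would make the Markov structure of $T_{\mathcal{D}}$ with respect to $\alpha_{\mathcal{D}}$ completely explicit. If $A\in\alpha_{\mathcal{D}}$ has $\rho|_{A}=n$, then, writing the $T$-coding of $\xi\in A$ as $w_{0}w_{1}w_{2}\cdots$ with $w_{i}\in G_{0}$, the non-return conditions $T^{j}\xi\notin\mathcal{D}$ for $1\le j\le n-1$ force $w_{1}=\cdots=w_{n}=\gamma$ for a single $\gamma\in\Gamma_{0}$, with $w_{0},w_{n+1}\in G_{0}\setminus\{\gamma^{\pm1}\}$, whenever $n\ge2$; the set $A$ is then the depth-$(n+2)$ cylinder determined by $w_{0},\gamma,\dots,\gamma,w_{n+1}$, the return branch is the M\"obius map $T_{\mathcal{D}}|_{A}=\gamma^{n-1}\circ w_{0}$, and it maps $A$ bijectively onto the depth-two cylinder $a_{\gamma w_{n+1}}\cap L_{r}(G)$; for $n=1$ one has $T_{\mathcal{D}}|_{A}=w_{0}$, mapping $A$ onto a depth-two cylinder. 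Since there are only finitely many depth-two cylinders and each of these is a countable union of elements of $\alpha_{\mathcal{D}}$, this yields simultaneously the Markov property of $T_{\mathcal{D}}$ with respect to $\alpha_{\mathcal{D}}$ and the big image property $\inf_{A\in\alpha_{\mathcal{D}}}\mu_{\delta,\mathcal{D}}(T_{\mathcal{D}}(A))>0$.

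Next I would establish uniform expansion and R\'enyi's bounded distortion condition for $T_{\mathcal{D}}$. Conjugating by $\tau_{\gamma}$ as in the proof of Lemma \ref{prop:asymptotic-n-excursion}, so that $p_{\gamma}\mapsto0$ and $\gamma$ becomes $\sigma_{\gamma}$, one has $(\sigma_{\gamma}^{n-1})'(\zeta)=(1+(n-1)w_{\gamma}\zeta)^{-2}$; since $w_{0}(A)=\gamma^{-(n-1)}(a_{\gamma w_{n+1}})$ is a cylinder of Euclidean diameter $\asymp n^{-2}$ collapsing to $p_{\gamma}$, on which $1+(n-1)w_{\gamma}\zeta\asymp1/n$, the chain rule gives $|(T_{\mathcal{D}}|_{A})'|\asymp n^{2}$ uniformly on $A$, so that $T_{\mathcal{D}}$ is (eventually) uniformly expanding. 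Differentiating $\log|(T_{\mathcal{D}}|_{A})'|=-2\log|1+(n-1)w_{\gamma}\zeta|+\log|w_{0}'|$ produces a derivative of order $n^{2}$, whose ratio to $|(T_{\mathcal{D}}|_{A})'|$ stays uniformly bounded; this is exactly R\'enyi's condition $|\log|(T_{\mathcal{D}}|_{A})'(x)|-\log|(T_{\mathcal{D}}|_{A})'(y)||\le C\,|T_{\mathcal{D}}(x)-T_{\mathcal{D}}(y)|$ with $C$ independent of $A$, the contribution of $w_{0}$ being harmless because $w_{0}'$ is smooth and bounded away from $0$ and $\infty$ on the relevant compact set. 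Together with the topological mixing of $T$ from Section 2.1, which passes to $T_{\mathcal{D}}$ by the Markov structure above, and the conservativity and ergodicity of Proposition \ref{prop2}, this shows that $(\mathcal{D},T_{\mathcal{D}},\mu_{\delta,\mathcal{D}})$, with $\mu_{\delta,\mathcal{D}}$ normalised to a probability measure, is a topologically mixing Gibbs--Markov map with the big image property, the associated Gibbs potential being locally H\"older by the distortion estimate just obtained together with the boundedness of $\h$ away from $0$ and $\infty$ on $\mathcal{D}$ recorded in Section 2.2.

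For such a map the Ruelle--Perron--Frobenius / Ionescu-Tulcea--Marinescu theory provides a spectral gap for the transfer operator $\widehat{T}_{\mathcal{D}}$ (dual to composition with $T_{\mathcal{D}}$ in $L^{2}(\mu_{\delta,\mathcal{D}})$) on the Banach space of bounded functions that are Lipschitz on each element of $\alpha_{\mathcal{D}}$, that is, $\widehat{T}_{\mathcal{D}}^{\,m}=\mu_{\delta,\mathcal{D}}(\,\cdot\,)\,\1+Q^{m}$ with $\|Q^{m}\|\le C_{0}\theta^{m}$ for some $\theta\in(0,1)$. Fix $k\in\N$, $A\in\bigvee_{j=0}^{k-1}T_{\mathcal{D}}^{-j}(\alpha_{\mathcal{D}})$ and a Borel set $B\subset\mathcal{D}$. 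By the Markov property and bounded distortion, $\widehat{T}_{\mathcal{D}}^{\,k}\1_{A}$ is supported on a union of elements of $\alpha_{\mathcal{D}}$, is bounded by a constant multiple of $\mu_{\delta,\mathcal{D}}(A)$, has Lipschitz norm on each partition element at most a constant multiple of $\mu_{\delta,\mathcal{D}}(A)$, and satisfies $\int\widehat{T}_{\mathcal{D}}^{\,k}\1_{A}\,d\mu_{\delta,\mathcal{D}}=\mu_{\delta,\mathcal{D}}(A)$; applying the spectral decomposition to $\widehat{T}_{\mathcal{D}}^{\,n}(\widehat{T}_{\mathcal{D}}^{\,k}\1_{A})$ then gives $\widehat{T}_{\mathcal{D}}^{\,k+n}\1_{A}=\mu_{\delta,\mathcal{D}}(A)\,\1+R_{n}$ with $\|R_{n}\|_{\infty}\le C_{1}\theta^{n}\mu_{\delta,\mathcal{D}}(A)$. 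Using the duality $\int\1_{A}\,(\1_{B}\circ T_{\mathcal{D}}^{k+n})\,d\mu_{\delta,\mathcal{D}}=\int(\widehat{T}_{\mathcal{D}}^{\,k+n}\1_{A})\,\1_{B}\,d\mu_{\delta,\mathcal{D}}$ and integrating against $\1_{B}$ yields $|\mu_{\delta,\mathcal{D}}(A\cap T_{\mathcal{D}}^{-(k+n)}B)-\mu_{\delta,\mathcal{D}}(A)\,\mu_{\delta,\mathcal{D}}(B)|\le C_{1}\theta^{n}\mu_{\delta,\mathcal{D}}(A)\,\mu_{\delta,\mathcal{D}}(B)$, which is precisely continued fraction mixing with $\epsilon_{n}:=C_{1}\theta^{n}$ and $l_{1}$ the least integer with $C_{1}\theta^{l_{1}}<1$.

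The main obstacle is the second step: the uniform expansion together with the bounded distortion estimate for return branches that penetrate arbitrarily deep into a cusp. No individual parabolic generator is uniformly expanding, since $\gamma'(p_{\gamma})=1$, and the cylinders $a_{\gamma^{n}g}$ have $m_{\delta}$-measure decaying only polynomially, of order $n^{-2\delta}$ by the generalised shadow lemma of Proposition \ref{prop:measure}; nevertheless this same polynomial decay forces the composed return branch to expand at rate $n^{2}$ while keeping its distortion bounded, the point being the explicit form $\sigma_{\gamma}^{n}(\xi)=\xi/(1+nw_{\gamma}\xi)$ from the proof of Lemma \ref{prop:asymptotic-n-excursion} together with the chain rule, exactly as for the jump transformations of parabolic interval maps. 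Once this is in place, the spectral gap for the transfer operator of a Gibbs--Markov map with big images is classical, and the passage to the two inequalities defining continued fraction mixing is routine.
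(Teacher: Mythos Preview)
Your proposal is correct and follows essentially the same route as the paper: establish that $(\mathcal{D},T_{\mathcal{D}},\mu_{\delta,\mathcal{D}})$ is Gibbs--Markov with big images, deduce a spectral gap for $\widehat{T}_{\mathcal{D}}$ on the Lipschitz space, and read off continued fraction mixing from the resulting exponential decay. The only difference is one of presentation: the paper imports the Gibbs--Markov property directly from \cite{MR2158407} and then cites \cite{MR1450400,MR1107025} for the passage to the spectral gap and to continued fraction mixing, whereas you re-derive the expansion and bounded-distortion estimates for the return branches by hand via the conjugation $\gamma=\tau_{\gamma}^{-1}\sigma_{\gamma}\tau_{\gamma}$; your explicit computation that $|(T_{\mathcal{D}}|_{A})'|\asymp n^{2}$ with Lipschitz $\log$-derivative of matching order is exactly what underlies the cited result.
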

\begin{proof}({\sl Sketch})
For the proof, note that in \cite{MR2158407} it was shown that for the induced system $\left(\mathcal{D},T_{\mathcal{D}},\mu_{\delta,\mathcal{D}}\right)$
we have that there exist constants $K>0$ and $c\in(0,1)$
such that for all $n,m \in \N$, and for every $A\in\bigvee_{k=0}^{n-1}T_{\mathcal{D}}^{-k}\left(\alpha_{\mathcal{D}}\right)$
and $B\in\bigvee_{k=0}^{m-1}T_{\mathcal{D}}^{-k}\left(\alpha_{\mathcal{D}}\right)$
with $B\subset T_{\mathcal{D}}^{n}(A)$, we have for $\mu_{\delta,\mathcal{D}}$ almost
all $\eta,\xi\in B$, that
\[
\left|\log\frac{d\mu_{\delta,\mathcal{D}}\circ T_{\mathcal{D},A}^{-n}}{d\mu_{\delta,\mathcal{D}}}(\xi)-\log\frac{d\mu_{\delta,\mathcal{D}}\circ T_{\mathcal{D},A}^{-n}}{d\mu_{\delta,\mathcal{D}}}(\eta)\right|\leq K  c^{m},
\]
where $T_{\mathcal{D},A}^{-n}$ denotes the inverse branch of $T_{\mathcal{D}}^{n}$
mapping $T_{\mathcal{D}}^{n}(A)$ bijectively to $A$.
(Note that the latter property is sometimes also referred to as the  Gibbs-Markov property of $T_{\mathcal{D}}$ with respect to the measure $\mu_{\delta,\mathcal{D}}$).
The next step is to note that this property of $T_{\mathcal{D}}$  allows one to conclude
that there
exist $K_{0}>0$ and $C_{0}\in(0,1)$ such that for all $\varphi\in L^{1}(\mu_{\delta,\mathcal{D}})$
and $n\in\N$, we have (see \cite{MR1450400,MR1107025})
\begin{equation}
\left\Vert {\widehat{T}_{\mathcal{D}}}^{n}\varphi-\mu_{\delta,\mathcal{D}}(\varphi)\right\Vert _{L} \leq K_{0} \, C_{0}^{n} \, \|\varphi\|_{L}.
\label{dual}
\end{equation}
Here, $\|\cdot\|_{L}$ refers to the uniform Lipschitz
norm with respect to $\alpha_{\mathcal{D}}$, which is given by (see \cite[page 541]{MR1107025})
\[
\Vert f\Vert_{L}:=\max_{A\in\alpha_{\mathcal{D}}}\sup_{x\in\Int\left(A\right)}\left|f\left(x\right)\right|+\max_{A\in\alpha_{\mathcal{D}}}\sup_{x,y\in\Int\left(A\right)}\frac{\left|f\left(x\right)-f\left(y\right)\right|}{\left|x-y\right|},
\]
and
$\widehat{T}_{\mathcal{D}}$ denotes the dual operator of $T_{\mathcal{D}}$,
which is given by
\[
\mu_{\delta,\mathcal{D}}(\varphi\cdot ( \psi\circ T_{\mathcal{D}}))=\mu_{\delta,\mathcal{D}}(\widehat{T}_{\mathcal{D}}(\varphi)\cdot\psi),\mbox{ for all }\varphi\in L^{1}(\mu_{\delta,\mathcal{D}}),\psi\in L^{\infty}(\mu_{\delta,\mathcal{D}}).
\]
To finish the proof, we  now employ an argument from  \cite[page 500]{MR1107025}, which guarantees that property (\ref{dual}) of the
dual $\widehat{T}_{\mathcal{D}}$  does indeed imply continued fraction mixing of $T_{\mathcal{D}}$ with
respect to the partition $\alpha_{\mathcal{D}}$.
\end{proof}
In the following, we let  $\mathcal{B}$ denote the Borel $\sigma$-algebra on $\R$. { Also, a set $B\in \mathcal{B}$ is called $\Gamma$-invariant if $B=\gamma (B)$, for all $\gamma\in \Gamma$. Moreover, we  define the normalised measure  $\tilde{\mu}_{\delta,\mathcal{D}}:=\mu_{\delta,\mathcal{D}}/\mu_{\delta,\mathcal{D}}\left(\mathcal{D}\right)$.
\begin{lem} 
\label{trans} For every $\Gamma$-invariant set  $B \in  \mathcal{B}$ the following equivalence holds.
\[ \tilde{\mu}_{\delta,\mathcal{D}} (B) =1 \mbox{ if and only if } m_{\delta} (B) =1.   \]
\end{lem}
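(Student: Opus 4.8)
The plan is to deduce the equivalence from three ingredients. First, the measures $m_\delta$ and $\mu_\delta$ are mutually absolutely continuous on $L(G)$: since $\mu_\delta=\h\cdot m_\delta$ we automatically have $\mu_\delta\ll m_\delta$, and the reverse absolute continuity $m_\delta\ll\mu_\delta$ holds because $\h$ is $m_\delta$-almost everywhere strictly positive --- indeed, for $m_\delta$-almost every $\xi\in L(G)$, writing $a_g$ for the cell of the partition $\alpha$ containing $\xi$, one has $\h(\xi)=\int_{\partial\X\setminus a_g}|\xi-\eta|^{-2\delta}\,dm_\delta(\eta)>0$ since $m_\delta(\partial\X\setminus a_g)>0$. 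Second, by the $\delta$-conformality (\ref{eq:delta-conformality}), the measure $m_\delta$ is quasi-invariant under $G$, so a Borel subset of $L(G)$ is $m_\delta$-null if and only if its image under any $g\in G$ is $m_\delta$-null. Third, every radial limit point can be moved into $\mathcal{D}=\{X_1=1\}$ by a power of a parabolic generator; more precisely,
\[
L_r(G)\subseteq\mathcal{D}\cup\bigcup_{\gamma\in\Gamma_0}\bigcup_{j\ge1}\gamma^{-j}(\mathcal{D}).
\]
To see this last point, take $\xi\in L_r(G)\setminus\mathcal{D}$. Since every hyperbolic generator has block length $1$, the first block of the word expansion of $\xi$ must consist of some $\gamma\in\Gamma_0$ repeated $k\ge2$ times, that is $\xi\in a_{\gamma^k}$. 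Because the coding map $T$ shifts the word expansion and $T|_{a_\gamma}=\gamma$, the points $\xi,T(\xi),\dots,T^{k-2}(\xi)$ all lie in $a_\gamma$, whence $T^{k-1}(\xi)=\gamma^{k-1}(\xi)$, and this point has a word expansion beginning with a single $\gamma$ followed by a generator different from $\gamma^{\pm1}$; hence $\gamma^{k-1}(\xi)\in\mathcal{D}$ and $\xi\in\gamma^{-(k-1)}(\mathcal{D})$.

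Granting these three ingredients, the implication ``$m_\delta(B)=1\Rightarrow\tilde{\mu}_{\delta,\mathcal{D}}(B)=1$'' is immediate and requires no invariance of $B$: by $\mu_\delta\ll m_\delta$ we get $\mu_\delta(L(G)\setminus B)=0$, hence $\mu_\delta(\mathcal{D}\setminus B)=0$, i.e.\ $\tilde{\mu}_{\delta,\mathcal{D}}(B)=1$. For the converse, assume $\tilde{\mu}_{\delta,\mathcal{D}}(B)=1$, that is $\mu_\delta(\mathcal{D}\setminus B)=0$; by the first ingredient this gives $m_\delta(\mathcal{D}\setminus B)=0$. The $\Gamma$-invariance of $B$, and hence of $L(G)\setminus B$, yields for every $\gamma\in\Gamma_0$ and $j\ge1$ that
\[
\gamma^{-j}(\mathcal{D})\setminus B=\gamma^{-j}(\mathcal{D})\cap\gamma^{-j}(L(G)\setminus B)=\gamma^{-j}(\mathcal{D}\setminus B),
\]
which by the second ingredient is again $m_\delta$-null. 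Since $m_\delta$ is supported on $L(G)$ and $m_\delta(L(G)\setminus L_r(G))=0$ (the group is geometrically finite and $m_\delta$ is non-atomic), the third ingredient then gives
\[
m_\delta(L(G)\setminus B)\le m_\delta(\mathcal{D}\setminus B)+\sum_{\gamma\in\Gamma_0}\sum_{j\ge1}m_\delta(\gamma^{-j}(\mathcal{D})\setminus B)=0,
\]
so that $m_\delta(B)=1$, which completes the argument.

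The only step that is not purely formal is the third ingredient, namely translating the combinatorial description of $\mathcal{D}=\{X_1=1\}$ into the displayed covering: one has to check carefully that $T$ acts on radial limit points as the shift on their word/block expansions and that the complement of $\mathcal{D}$ is exactly the set of points whose expansion starts with a parabolic block of length at least two. This is implicit in the constructions of $T$ and of the process $(X_k)$, so it should require only careful bookkeeping rather than a new idea. I would also emphasise that the hypothesis that $B$ be $\Gamma$-invariant is genuinely used in the ``only if'' direction: a $\Gamma$-invariant set need not be $T$-invariant, since it is not assumed invariant under the hyperbolic generators, so one cannot simply invoke the ergodicity of $T$ with respect to $\mu_\delta$ established in Proposition \ref{prop2}.
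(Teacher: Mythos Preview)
Your proof is correct and follows essentially the same strategy as the paper's own argument: both reduce the nontrivial implication to the covering of $L_r(G)$ by $\Gamma$-translates of $\mathcal{D}$, combined with the quasi-invariance of $m_\delta$ under $G$ and the mutual absolute continuity of $m_\delta$ and $\mu_\delta$. The paper simply states the covering $L_r(G)=\bigcup_{\gamma\in\Gamma}\gamma(\mathcal{D})$ and the absolute continuity $m_\delta\ll\sum_{\gamma\in\Gamma}\tilde{\mu}_{\delta,\mathcal{D}}\circ\gamma$ without further justification, whereas you supply explicit arguments for the sharper covering by powers of single parabolic generators and for $\Phi>0$ $m_\delta$-a.e.; these additions are welcome but do not constitute a different route.
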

\begin{proof}
Clearly, the ``if'' part of the equivalence is trivial. Therefore, it is sufficient to show that $\tilde{\mu}_{\delta,\mathcal{D}} (B) =1$ implies
$m_{\delta} (B) =1$.  For this, note that since $\tilde{\mu}_{\delta,\mathcal{D}} (B^{{\bf c}}) =0$  and $\gamma^{-n}  (B)=B$ for all $\gamma \in \Gamma$, 
 it follows that
$ \sum_{\gamma\in\Gamma}\tilde{\mu}_{\delta,\mathcal{D}}\circ\gamma(B^{{\bf c}}) =0$.
 Furthermore, using  the fact that
 $L_{r}\left(G\right) = \bigcup_{\gamma\in\Gamma} \gamma\left(\mathcal{D}\right)$ and using
  $\delta$-conformality of $m_{\delta}$, we have that
$m_{\delta}=\sum_{\gamma\in\Gamma}m_{\delta}|_{\gamma\left(\mathcal{D}\right)}$ is absolutely continuous with respect to $\sum_{\gamma\in\Gamma}\tilde{\mu}_{\delta,\mathcal{D}}\circ\gamma.$ 
This implies that $m_{\delta} (B^{{\bf c}}) =0$ and hence
finishes the proof of the lemma.
\end{proof}}

We end this section with the following lemma, which  gives the relationship between the process $(X_{k})$
and the return time function $\rho$. This result will turn out to be helpful
in the proof of Theorem \ref{thm}.
\begin{lem}
\label{lem:Xi-vs-returntime}For each $\xi\in\mathcal{D}$ and $n\in\N$, we have that
\[
X_{n+1}\left(\xi\right)=\rho\left(T_{\mathcal{D}}^{n-1}\left(\xi\right)\right).
\]
\end{lem}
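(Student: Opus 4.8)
The plan is to unwind the definitions of the block process $(X_k)$, the induced map $T_{\mathcal D}$, and the return time $\rho$, and observe that they are all bookkeeping devices for the same underlying combinatorial structure: the reduced word expansion of $\xi$ decomposed into maximal parabolic runs. First I would recall that by construction the set $\mathcal D=\{X_1=1\}$ consists precisely of those $\xi\in L_r(G)$ whose word expansion begins with a \emph{hyperbolic} generator (equivalently, the first block has length one), so that $\xi\in a_g\cap L_r(G)$ for some $g\in H_0$, and moreover $T(\xi)$ has word expansion obtained by deleting that first symbol. More generally, $T^k(\xi)$ has word expansion equal to the $k$-shift of the word of $\xi$, so the block structure of $T^k(\xi)$ is the block structure of $\xi$ with the first several (possibly partial) blocks removed.

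The key observation is then the following. For $\xi\in\mathcal D$, the first block has length $X_1(\xi)=1$ and is consumed by a single application of $T$; the second block has length $X_2(\xi)$, and $T$ enters $\mathcal D$ again exactly when it has finished reading this second block, i.e. when the next symbol to be read is again a hyperbolic generator or the first symbol of a fresh parabolic run — which by definition of the block decomposition happens after exactly $X_2(\xi)$ further steps. Hence $\rho(\xi)=X_2(\xi)$ for $\xi\in\mathcal D$; this is the case $n=1$ of the lemma. For the general case I would argue by induction (or directly): since $T_{\mathcal D}^{n-1}(\xi)\in\mathcal D$ and its word expansion is the word of $\xi$ with the first $n-1$ blocks deleted, its \emph{second} block is the $(n+1)$-th block of $\xi$, so
\[
\rho\bigl(T_{\mathcal D}^{n-1}(\xi)\bigr)=X_2\bigl(T_{\mathcal D}^{n-1}(\xi)\bigr)=X_{n+1}(\xi),
\]
where the last equality uses that deleting the first $n-1$ blocks shifts the block-indexing of $(X_k)$ down by $n-1$. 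This is exactly the claimed identity.

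The only point requiring genuine care — and the step I would expect to be the main (minor) obstacle — is verifying that the cylinder sets $a_{g_1\gamma^n g_2}$ used in the definition of the events $\{X_1=1,X_2=n\}$ in Lemma \ref{prop:asymptotic-n-excursion} really do capture ``maximal'' parabolic runs, i.e. that the insistence $g_1\in G_0$, $\gamma\in\Gamma_0\setminus\{g_1^{\pm1}\}$, $g_2\in G_0\setminus\{\gamma^{\pm1}\}$ correctly encodes the requirement that the run of $\gamma$'s is not extendable on either side. Once one records that $\xi\in a_{g_1\gamma^n g_2}$ means the word of $\xi$ starts $g_1\gamma^n g_2\ldots$ with $g_1\neq\gamma^{\pm1}$ and $g_2\neq\gamma^{\pm1}$ — so the second block is exactly $\gamma^n$ of length $n$ — the identification $\rho=X_2$ on $\mathcal D$ and the shift property of $T$ on word expansions are immediate from the definitions in the introduction and Section 2.1, and the induction closes without further computation.
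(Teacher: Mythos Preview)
Your overall approach matches the paper's: first establish $\rho(\xi)=X_2(\xi)$ for $\xi\in\mathcal D$, then induct. The conclusion is correct, but two of your intermediate descriptions are wrong and would need to be repaired for the argument to be rigorous.

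First, $\mathcal D=\{X_1=1\}$ does \emph{not} consist precisely of those $\xi$ whose word begins with a hyperbolic generator: a word beginning with a parabolic $\gamma$ followed immediately by some $g\ne\gamma^{\pm1}$ also has first block of length $1$. The paper's base case accordingly allows $g_1\in G_0$ (not just $H_0$) in the code $(g_1\gamma^k g_2\ldots)$.

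Second, and more importantly, the word of $T_{\mathcal D}^{n-1}(\xi)$ is \emph{not} the word of $\xi$ with the first $n-1$ blocks deleted. Were that so, its first block would be the $n$-th block of $\xi$, of length $X_n(\xi)$, contradicting $T_{\mathcal D}^{n-1}(\xi)\in\mathcal D$ whenever $X_n(\xi)>1$. What actually happens --- and what the paper verifies via the identity $T_{\mathcal D}^{n}(\xi)=T^{\sum_{k=1}^{n}X_{k+1}(\xi)}(\xi)$ --- is that the word of $T_{\mathcal D}^{n-1}(\xi)$ begins at the \emph{last} symbol of the $n$-th block of $\xi$; that single symbol, being followed by a different generator, then constitutes its first block of length $1$. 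Your key conclusion, that the \emph{second} block of $T_{\mathcal D}^{n-1}(\xi)$ coincides with the $(n+1)$-th block of $\xi$, remains correct, so once this bookkeeping is fixed the induction closes as you intend.
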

\begin{proof}
Let us first show that for each $\xi\in\mathcal{D}$ we have that the assertion of the lemma holds for $n=1$, that is, that we have
$
X_{2}\left(\xi\right)=\rho\left(\xi\right).
$
In order to see this, it is sufficient to consider the following two cases. \\
{\em Case 1:} If  $\xi$ is coded by $\left(g_{1}\gamma^{k}g_{2}\dots\right)$,
 for some $g_{1}\in G_{0}$, $\gamma\in\Gamma_{0}\setminus \{g_{1}^{\pm1}\}$, $g_{2}\in G_{0}\setminus\left\{ \gamma^{\pm1}\right\} $
 and $k\in\N$, then we clearly have that $X_{2}\left(\xi\right)=\rho\left(\xi\right)=k$. This settles the assertion in this case.\\
{\em Case 2:}  If  $\xi$ is coded by $\left(gh...\right)$,
for some $g\in G_{0}$ and $h\in H_{0}\setminus \{g^{\pm1}\}$, then we have that $X_{2}\left(\xi\right)=\rho\left(\xi\right)=1$.
This finishes the proof in this case.

We now proceed by induction as follows.
 Assume that  $X_{k+1}\left(\xi\right)=\rho\left(T_{\mathcal{D}}^{k-1}\left(\xi\right)\right)$,  for each $1\leq k \leq n$ and for some $n \in\N$. By definition of $T_{\mathcal{D}}$, we then have
 \[ T^{n}_{\mathcal{D}} (\xi) = T^{\sum_{k=1}^{n} \rho(T_{\mathcal{D}}^{k-1}(\xi))}(\xi)= T^{\sum_{k=1}^{n} X_{k+1}(\xi)}(\xi).\]
 This shows  that the first entry in the code of $T^{n}_{\mathcal{D}} (\xi) $ is equal to the last symbol of the $(n+1)$-th
 block in the code of $\xi$.
 As in the case $n=1$, we now have to consider two cases, which both lead to
  $X_{n+2}\left(\xi\right)=\rho\left(T_{\mathcal{D}}^{n}\left(\xi\right)\right)$.  This finishes the proof of the lemma.
\end{proof}

\subsection{Extreme Value Theory}

For the proof of Theorem \ref{thm}, we also require the following general
result from Extreme Value Theory. Before stating this result, let us recall the
following distributional mixing conditions for a stationary real-valued
process $(Z_{k})$ with respect to some sequence
$\left(u_{n}\right)$ of  real numbers and with respect to some probability
measure $\P$. Here, the set $J_{n,l}$ is defined for each $n,l\in\N$
by
\begin{align*}
J_{n,l}: & =\big\{\left(\left(\ell_{1},\dots,\ell_{p}\right),\left(\ell_{p+1},\dots,\ell_{p+q}\right)\right):p,q,\ell_{k}\in\N,\mbox{ for }1\le k\le p+q,\\
 & \,\qquad\qquad\qquad\qquad\qquad\qquad\qquad 1\le \ell_{1}<\dots<\ell_{p+q}\le n,\ell_{p+1}-\ell_{p}\ge l\big\},
\end{align*}
and  $\tilde{Y}_{\left(\ell_{1},\dots,\ell_{m}\right)}$ is defined for $m \in\N$
and $\ell_{k}\in\N$, for $ 1 \leq k \leq m$, by
\[
\tilde{Y}_{\left(\ell_{1},\dots,\ell_{m}\right)}:=\max_{k=1,\dots,m}Z_{\ell_{k}}.
\]
Also,   we write $v\ast w$ for the
concatenation of two tupels $v$ and $w$.

\begin{itemize}
\item[]
{\bf Condition $\boldsymbol{D(u_n)}$} ({\cite[(3.2.1)]{MR691492}}). {\em We say that Condition $D\left(u_{n}\right)$
holds if there exists a sequence $\left(l_{n}\right)$ of positive
integers such that $\lim_{n}l_{n}/n=0$ and}
\[
\lim_{n\rightarrow\infty}\sup_{\left(v,w\right)\in J_{n,l_{n}}}\left|\P\left(\left\{ \tilde{Y}_{v\ast w}\le u_{n}\right\} \right)-\P\left(\left\{ \tilde{Y}_{v}\le u_{n}\right\} \right)\P\left(\left\{ \tilde{Y}_{w}\le u_{n}\right\} \right)\right|=0.
\]
\item[]
{\bf Condition $\boldsymbol{D'(u_n)}$} (\cite[(3.4.3)]{MR691492}). {\em We say that Condition $D'\left(u_{n}\right)$
holds if we have that }
\[
\lim_{k\rightarrow\infty}\limsup_{n\rightarrow\infty}\, n\sum_{j=2}^{\left\lfloor n/k\right\rfloor }\P\left(\left\{ Z_{1}>u_{n},Z_{j}>u_{n}\right\} \right)=0.
\]
\end{itemize}
\begin{prop}
{\emph{(\cite[Theorem 3.4.1]{MR691492})} \label{prop:Leadbetter}Let
$\left(u_{n}\right)$ be a sequence such that $D\left(u_{n}\right)$
and $D'\left(u_{n}\right)$ hold for the stationary real-valued process
$(Z_{k})$ with probability measure $\P$, and
let $0\le\tau\le\infty$. Then the following equivalence holds.
\begin{equation}
\lim_{n\rightarrow\infty}\P\left(\left\{ \max_{k=1,\dots,n}Z_{k}\le u_{n}\right\} \right)=\e^{-\tau}\,\,\mbox{if and only if }\,\,\lim_{n\rightarrow\infty} n \, \P\left(\left\{ Z_{1}>  u_{n}\right\} \right)=\tau.\label{eq:leadbetter-thm}
\end{equation}
}
\end{prop}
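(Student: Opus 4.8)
The plan is to prove Proposition~\ref{prop:Leadbetter} by the standard blocking argument of Extreme Value Theory. Write $M_{n}:=\max_{k=1,\dots,n}Z_{k}$ and $\tau_{n}:=n\,\P(\{Z_{1}>u_{n}\})$; by stationarity the union bound gives $0\le\P(\{M_{n}>u_{n}\})\le\tau_{n}$, and the Bonferroni inequality gives $\P(\{M_{m}>u_{n}\})\ge m\,\P(\{Z_{1}>u_{n}\})-m\sum_{j=2}^{m}\P(\{Z_{1}>u_{n},Z_{j}>u_{n}\})$ for every $m\le n$. The first step is to deduce from $D'(u_n)$ that the sequence $(\tau_{n})$ is bounded: taking $m=\lfloor n/k\rfloor$ in the Bonferroni bound and using $\P(\{M_{m}>u_{n}\})\le1$ yields $n\sum_{j=2}^{\lfloor n/k\rfloor}\P(\{Z_{1}>u_{n},Z_{j}>u_{n}\})\ge\tau_{n}-(k+1)$ for $n$ large, so if $\limsup_{n}\tau_{n}=\infty$ the left-hand side would have $\limsup_{n}=\infty$ for every $k$, contradicting $D'(u_n)$. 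In particular the case $\tau=\infty$ will turn out to be vacuous on both sides of the asserted equivalence, and from now on I may assume $0\le\tau<\infty$ and that $(\tau_n)$ is bounded.

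Next I would use $D(u_n)$ to replace $\P(\{M_{n}\le u_{n}\})$ by a $k$-th power. Fix $k\in\N$, let $(l_{n})$ be the sequence furnished by $D(u_n)$ (so $l_{n}/n\to0$), and partition $\{1,\dots,n\}$ into $k$ consecutive blocks of common length $r=r_{n,k}\sim n/k$ separated by $k-1$ gaps of length $l_{n}$. Iterating $D(u_n)$ across the $k-1$ gaps shows that the probability that every block has maximum $\le u_{n}$ differs from $\P(\{M_{r}\le u_{n}\})^{k}$ by at most $(k-1)$ times the supremum appearing in $D(u_n)$, hence by $o_{n}(1)$ for fixed $k$; and since $\{M_{n}\le u_{n}\}$ differs from ``every block has maximum $\le u_{n}$'' only on the $(k-1)l_{n}$ indices lying in the gaps, a further union bound and the boundedness of $(\tau_{n})$ give
\[
\bigl|\,\P(\{M_{n}\le u_{n}\})-\P(\{M_{r}\le u_{n}\})^{k}\,\bigr|\le o_{n}(1)+(k-1)(l_{n}/n)\,\tau_{n}=o_{n}(1)\qquad(\text{for fixed }k).
\]
For the single-block probability, the union bound and the Bonferroni bound with $m=r$, together with $D'(u_n)$ (which bounds $\limsup_{n}r\sum_{j=2}^{r}\P(\{Z_{1}>u_{n},Z_{j}>u_{n}\})$ by a quantity $\beta_{k}$ with $\beta_{k}\to0$ as $k\to\infty$), yield $\P(\{M_{r}\le u_{n}\})=1-r\,\P(\{Z_{1}>u_{n}\})+\eta_{n,k}$ with $\limsup_{n}|\eta_{n,k}|\le\beta_{k}$.

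Then I would assemble. Raising the last identity to the $k$-th power, using $|a^{k}-b^{k}|\le k|a-b|$ on $[0,1]$, $1-x\le\e^{-x}$, and $(1-x)^{k}=\e^{-kx}(1+O(kx^{2}))$ for small $x$, and noting that $k\,r\,\P(\{Z_{1}>u_{n}\})=\tau_{n}(1-o_{n}(1))$ while $k(r\,\P(\{Z_{1}>u_{n}\}))^{2}=O(\tau_{n}^{2}/k)=O(1/k)$, one gets $\P(\{M_{r}\le u_{n}\})^{k}=\e^{-\tau_{n}}+o_{n}(1)+O(\beta_{k})+O(1/k)$. Combining with the displayed estimate gives $\limsup_{n}\bigl|\,\P(\{M_{n}\le u_{n}\})-\e^{-\tau_{n}}\,\bigr|\le O(\beta_{k})+O(1/k)$, and since the left-hand side is independent of $k$, letting $k\to\infty$ gives $\P(\{M_{n}\le u_{n}\})-\e^{-\tau_{n}}\to0$. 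The asserted equivalence then follows at once from the continuity of $x\mapsto\e^{-x}$: $\P(\{M_{n}\le u_{n}\})\to\e^{-\tau}$ if and only if $\e^{-\tau_{n}}\to\e^{-\tau}$ if and only if $\tau_{n}\to\tau$, for every $0\le\tau<\infty$; and the case $\tau=\infty$ was disposed of in the first step.

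The hard part is the faithful execution of the blocking in the second step: organising $k$ blocks and $k-1$ separating gaps inside $\{1,\dots,n\}$ so that the gap lengths match the $l_{n}$ from $D(u_n)$, iterating the mixing estimate exactly $k-1$ times, and verifying that the accumulated error remains $o_{n}(1)$ for each fixed $k$. The complementary subtlety is the order of limits in the assembling step, where one bounds the $k$-independent quantity $\limsup_{n}|\P(\{M_{n}\le u_{n}\})-\e^{-\tau_{n}}|$ by an expression tending to $0$ as $k\to\infty$; this is precisely where $D(u_n)$ (to decouple the blocks) and $D'(u_n)$ (to keep $(\tau_n)$ bounded and to linearise the block probability) are used in tandem.
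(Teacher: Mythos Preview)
Your argument is correct and is precisely the classical blocking proof; the paper itself does not prove this proposition at all but simply quotes it as \cite[Theorem 3.4.1]{MR691492}, so there is nothing to compare against beyond noting that your sketch reproduces the Leadbetter--Lindgren--Rootz\'en argument faithfully. One tiny bookkeeping point: in the assembling step you pick up a factor $k$ from $|a^{k}-b^{k}|\le k|a-b|$, so the error coming from $\eta_{n,k}$ is $O(k\beta_{k})$ rather than $O(\beta_{k})$; this is harmless because $D'(u_n)$ in fact gives $k\beta_{k}\approx\limsup_{n}n\sum_{j=2}^{\lfloor n/k\rfloor}\P(\{Z_{1}>u_{n},Z_{j}>u_{n}\})\to0$, but you may want to make that explicit.
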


\section{Proofs of the  Main Results }
\begin{proof}
[Proof of Theorem \ref{thm}]The proof of Theorem \ref{thm}  consists of verifying
the following three claims.
\begin{description}
\item [{Claim~1.}] The process $(X_{k+1}^{2\delta-1})$
is stationary with respect to the probability measure {$\tilde{\mu}_{\delta,\mathcal{D}}$}
absolutely continuous with respect to $m_{\delta}$. Further,
Condition $D\left(v_{n}\left(s\right)\right)$ and Condition $D'\left(v_{n}\left(s\right)\right)$
are satisfied with respect to the sequence $(v_{n}(s))$, given by $v_{n}\left(s\right):=\left(n+1\right)s$,
for all $n \in \N$ and $s>0$.
\item [{Claim~2.}] The right hand side of (\ref{eq:leadbetter-thm}) in
Proposition \ref{prop:Leadbetter} is satisfied for the stationary
process $(X_{k+1}^{2\delta-1})$ with respect
to the sequence $(v_{n}\left(s\right))$ given in Claim 1.
\item [{Claim~3.}] If the first assertion of the equivalence in (\ref{eq:leadbetter-thm}) in
Proposition \ref{prop:Leadbetter} holds for some probability measure
absolutely continuous with respect to $m_{\delta}$, then it holds
for all probability measures absolutely continuous with respect to
$m_{\delta}$.
\end{description}
\noindent \textit{Claim 1}: In order to show that $(X_{k+1}^{2\delta-1})$
is a stationary process, note that it is well-known that the $T$-invariance
of $\mu_{\delta}$ implies
that $\tilde{\mu}_{\delta,\mathcal{D}}$ is $T_{\mathcal{D}}$-invariant
(see \cite[Theorem 17.1.3]{MR1419320}). Since by Lemma \ref{lem:Xi-vs-returntime},
we have for all $k\in\N$ and $\xi\in\mathcal{D}$ that
$
X_{k+1}\left(\xi\right)=\rho\left(T_{\mathcal{D}}^{k-1}(\xi)\right),
$
it follows that $(X_{k+1}^{2\delta-1})$ is a
stationary process.  Next, we verify that Condition $D\left(v_{n}\left(s\right)\right)$
and Condition $D'\left(v_{n}\left(s\right)\right)$ hold, for the sequence $(v_{n}(s))$ defined in Claim 1. Since the dynamical system $\left(\mathcal{D},T_{\mathcal{D}},\tilde{\mu}_{\delta,\mathcal{D}}\right)$
is continued fraction mixing with respect to the induced partition
$\alpha_{\mathcal{D}}$,
there exists a sequence $\left(l_{n}\right)$ with $\lim_{n}l_{n}/n=0$
such that Condition $D\left(v_{n}\left(s\right)\right)$ holds
for the process ($X_{k+1}^{2\delta-1})$ with
respect to $\tilde{\mu}_{\delta,\mathcal{D}}$.
The next aim is to show that  Condition $D'\left(v_{n}\left(s\right)\right)$ also
holds for this process, that is, that we have
\begin{equation}
\lim_{k\rightarrow\infty}\limsup_{n\rightarrow\infty}\, n\sum_{j=2}^{\left\lfloor n/k\right\rfloor }\tilde{\mu}_{\delta,\mathcal{D}}\left(\left\{ X_{2}^{2\delta-1}>
v_{n}(s),X_{j+1}^{2\delta-1}>v_{n}(s) \right\} \right)=0.\label{eq:condition_D'}
\end{equation}
Using the fact that  $\left(\mathcal{D},T_{\mathcal{D}},\tilde{\mu}_{\delta,\mathcal{D}}\right)$
is continued fraction mixing with respect to $\alpha_{\mathcal{D}}$,
 it follows that there exists a constant $C>0$ such that
\begin{eqnarray*}
  \tilde{\mu}_{\delta,\mathcal{D}} && \hspace{-9mm} \left(\left\{ X_{2}^{2\delta-1}>v_{n}(s),X_{j+1}^{2\delta-1}>v_{n}(s) \right\} \right)\\
 & \le & \left(1+C\right)  \tilde{\mu}_{\delta,\mathcal{D}}\left(\left\{ X_{2}^{2\delta-1}>v_{n}(s) \right\} \right)\tilde{\mu}_{\delta,\mathcal{D}}\left(\left\{ X_{j+1}^{2\delta-1}>v_{n}(s) \right\} \right).
\end{eqnarray*}
Combining this with the stationarity of $(X_{k+1}^{2\delta-1})$,
we obtain that
\begin{eqnarray*} & &   \sum_{j=2}^{\left\lfloor n/k\right\rfloor }\tilde{\mu}_{\delta,\mathcal{D}}\left(\left\{ X_{2}^{2\delta-1}>v_{n}(s),X_{j+1}^{2\delta-1}>v_{n}(s) \right\} \right)\\
& \le & \left(1+C\right)  \, \sum_{j=2}^{\left\lfloor n/k\right\rfloor }\tilde{\mu}_{\delta,\mathcal{D}}\left(\left\{ X_{2}^{2\delta-1}>v_{n}(s) \right\} \right)
 \tilde{\mu}_{\delta,\mathcal{D}}\left(\left\{ X_{2}^{2\delta-1}>v_{n}(s) \right\} \right)\\
 & \leq & \left(1+C\right) n/k \, \left(\tilde{\mu}_{\delta,\mathcal{D}}\left(\left\{ X_{2}^{2\delta-1}>v_{n}(s) \right\} \right)\right)^{2}.
\end{eqnarray*}
Using (\ref{eq:tail-asymptotic}), which will be obtained in Claim
2 below, we have that
\[
\limsup_{n\rightarrow\infty}n^{2}/k \, \, \tilde{\mu}_{\delta,\mathcal{D}}\left(\left\{ X_{2}^{2\delta-1}>v_{n}(s) \right\} \right)^{2}= k^{-1}\left(s^{-1}\kappa\left(G\right)\right)^{2},
\]
which tends to zero for $k$ tending to infinity. This completes the
proof of (\ref{eq:condition_D'}).

\noindent \textit{Claim 2}: The aim is to show that for each $s>0$, we have that
\begin{equation}
\lim_{n\to \infty} \left( (n+1)\tilde{\mu}_{\delta,\mathcal{D}}\left(\left\{ X_{2}^{2\delta-1}>  v_{n}\left(s\right)\right\} \right)\right)= s^{-1}\kappa\left(G\right).\label{eq:tail-asymptotic}
\end{equation}
In order to prove this, let us  first remark that
\[
\tilde{\mu}_{\delta,\mathcal{D}}\left(\left\{ X_{2} > v_{n}(s)^{1/(2\delta-1)} \right\} \right)\\
  =  \sum_{k\ge\left\lceil v_{n}(s)^{1/\left(2\delta-1\right)}\right\rceil }\tilde{\mu}_{\delta,\mathcal{D}}\left(\left\{ X_{1}=k\right\} \right).
\]
By Lemma \ref{prop:asymptotic-n-excursion}, we have, for $n$ tending to infinity, that the asymptotic behaviour of
\[  \sum_{k\ge\left\lceil v_{n}(s)^{1/\left(2\delta-1\right)}\right\rceil }  \tilde{\mu}_{\delta,\mathcal{D}}\left(\left\{ X_{2}=k\right\} \right)\]
coincides with the asymptotic behaviour of
\[  \sum_{k\ge\left\lceil v_{n}(s)^{1/\left(2\delta-1\right)}\right\rceil } \mu_{\delta}\left(\mathcal{D}\right)^{-1} k^{-2\delta}\sum_{\gamma\in\Gamma_{0}} \left(\h\left(p_{\gamma}\right) w_{\gamma}^{-\delta}\right)^{2},\]
where, by definition of $\kappa\left(G\right)$, the latter sum is equal to
 \[ \sum_{k\ge\left\lceil v_{n}(s)^{1/\left(2\delta-1\right)}\right\rceil } k^{-2\delta} \kappa\left(G\right)\left(2\delta-1\right).
\]
Now, note that one immediately verifies, by using the integral test, that
\[
\lim_{n\to \infty}  \sum_{k\ge\left\lceil v_{n}(s)^{1/\left(2\delta-1\right)}\right\rceil } (n+1) k^{-2\delta}  =   \left((2\delta-1)s \right)^{-1}.
\]
By inserting this into the calculation above, the assertion in   (\ref{eq:tail-asymptotic}) follows.
We can now combine (\ref{eq:tail-asymptotic}) and Proposition \ref{prop:Leadbetter}, which gives  that
for each $s>0$, we have
\[
\lim_{n\to\infty}\tilde{\mu}_{\delta,\mathcal{D}}\left(\left\{ \max_{k=2,\dots,n+1}X_{k}^{2\delta-1}/(n+1)\le s\right\} \right)=\exp\left(-\kappa\left(G\right)/s\right).
\]
Finally, recall that $\tilde{\mu}_{\delta,\mathcal{D}}$ is supported on  $\left\{ X_{1}=1\right\}$, and hence, if  we include $X_{1}$ in the preceding expression, then this does not alter the maximum. Therefore,
it follows that

\noindent
\[
\lim_{n\to\infty}\tilde{\mu}_{\delta,\mathcal{D}}\left(\left\{ Y_{n}{}^{2\delta-1}/n\le s\right\} \right)=\exp\left(-\kappa\left(G\right)/s \right).
\]

\noindent \textit{Claim 3}: The aim is to show that for each probability
measure $\nu$ absolutely continuous with respect to $m_{\delta}$,
we have that the following distributional convergence holds:
\begin{equation}
\frac{1}{n}\left(Y_{n}^{2\delta-1}-Y_{n}^{2\delta-1}\circ T\right)\overset{\nu}{\longrightarrow}0,\mbox{ for }n\mbox{ tending to infinity}.\label{eq:thaler-condition-for-strongdistributionalconv}
\end{equation}
This will be sufficient for the proof of Claim 3, since by combining (\ref{eq:thaler-condition-for-strongdistributionalconv}) with
 Aaronson's Compactness Theorem (see \cite[Proposition 0]{MR632462},
 \cite{MR1603998}), we have that if $(Y_{n}^{2\delta-1}/n)$ converges in distribution with
respect to some particular probability measure absolutely continuous
with respect to $m_{\delta}$, then  it
converges in distribution with respect to {\em every} probability measure
absolutely continuous with respect to $m_{\delta}$.

For the proof of (\ref{eq:thaler-condition-for-strongdistributionalconv}),
let us fix a probability measure $\nu$ absolutely continuous with respect to $m_{\delta}$. For each $\epsilon>0$
and for all $n>1/\epsilon$, we then have that
\begin{eqnarray*}
\left\{ \left|Y_{n}^{2\delta-1}-Y_{n}^{2\delta-1}\circ T\right|>\epsilon n\right\}  & \subset & \mathcal{D}\cap\left\{ \left|\max_{k=1,\dots,n}X_{k}^{2\delta-1}-\max_{k=2,\dots,n+1}X_{k}^{2\delta-1}\right|>\epsilon n\right\} \\
 & \subset & \mathcal{D}\cap\left\{ X_{1}^{2\delta-1}>\epsilon n\,\vee\, X_{n+1}^{2\delta-1}>\epsilon n\right\} \\
 & = & \mathcal{D}\cap\left\{ X_{n+1}^{2\delta-1}>\epsilon n\right\} .
\end{eqnarray*}
Hence, it follows that
\[
\nu\left(\left\{ \left|Y_{n}^{2\delta-1}-Y_{n}^{2\delta-1}\circ T\right|>\epsilon n\right\} \right)\le\nu\left(\mathcal{D}\cap\left\{ X_{n+1}^{2\delta-1}>\epsilon n\right\} \right).
\]
In order to finish the proof, it remains to show that $\nu(\mathcal{D}\cap\{ X_{n+1}^{2\delta-1}>\epsilon n\} )$ tends to zero,
for $n$ tending to infinity. In order to see this, note that for $\varphi:=d\nu/d\mu_{\delta}\in L^{1}\left(\mu_{\delta}\right)$ we have, using Lemma \ref{lem:Xi-vs-returntime},  that
\begin{eqnarray*}
\nu\left(\mathcal{D}\cap\left\{ X_{n+1}^{2\delta-1}>\epsilon n\right\} \right) & = & \int\1_{\mathcal{D}\cap\left\{ X_{n+1}^{2\delta-1}>\epsilon n\right\} }\cdot \varphi\; d\mu_{\delta}\\
 & = & \int\1_{\left\{ X_{n+1}^{2\delta-1}>\epsilon n\right\} }\cdot \varphi\, \;d\mu_{\delta,\mathcal{D}}\\
 & = & \int\left(\1_{\left\{ X_{2}^{2\delta-1}>\epsilon n\right\} }\circ T_{\mathcal{D}}^{n-1}\right)\cdot \varphi\, \;d\mu_{\delta,\mathcal{D}}.
\end{eqnarray*}
Since the set of Lipschitz continuous functions is dense in $L^{1}\left(\mu_{\delta,\mathcal{D}}\right)$, we have that for each $\epsilon'>0$ there exists
 a Lipschitz continuous function $\psi$ such that $\Vert\psi-\varphi_{|\mathcal{D}}\Vert_{1}<\epsilon'$.
By inserting this into the above calculation,
we obtain that
\begin{eqnarray*}
\nu\left(\mathcal{D}\cap\left\{ X_{n+1}^{2\delta-1}>\epsilon n\right\} \right) & \le & \epsilon'+\int\left(\1_{\left\{ X_{2}^{2\delta-1}>\epsilon n\right\} }\circ T_{\mathcal{D}}^{n-1}\right)\cdot \psi\, \;d\mu_{\delta,\mathcal{D}}\\
 & = & \epsilon'+\int\1_{\left\{ X_{2}^{2\delta-1}>\epsilon n\right\} }\cdot  \hat{T}_{\mathcal{D}}^{n-1}\left(\psi\right)\, \;d\mu_{\delta,\mathcal{D}}.
\end{eqnarray*}
For the second summand in the latter expression, we then observe that, by (\ref{dual}) in the proof of Proposition \ref{prop:inducedsystem-cf-mixing}, we have
that
\begin{eqnarray*}
 \limsup_{n\rightarrow\infty}\int\1_{\left\{ X_{2}^{2\delta-1}>\epsilon n\right\} }\cdot \hat{T}_{\mathcal{D}}^{n-1}\left(\psi\right)\, \;d\mu_{\delta,\mathcal{D}} && \\ &&  \!\!\!\!\!\!\!\!\!\!\!\!\!\!\!\!\!\!\!\!\!\! \le\limsup_{n\rightarrow\infty}\mu_{\delta,\mathcal{D}}\left(\psi\right)\int\1_{\left\{ X_{2}^{2\delta-1}>\epsilon n-1\right\} }\, \;d\mu_{\delta,\mathcal{D}}=0.
\end{eqnarray*}
Note that in here, the second expression vanishes, since  $\left(\{ X_{2}^{2\delta-1}>\epsilon n\} \right)$ is a nested sequence of sets which
decreases to the empty set. Therefore, since $\epsilon'>0$ was chosen to be arbitrary,
the statement in (\ref{eq:thaler-condition-for-strongdistributionalconv})
follows. This finishes the proof of Claim 3 and hence, the proof of
Theorem \ref{thm} is complete.
\end{proof}
{
\begin{proof}
[Proof of Theorem $\ref{prop:loglogliminf}$] First, note that the sets considered in both assertions of Theorem \ref{prop:loglogliminf} are 
  $\Gamma$-invariant. Hence, using Lemma \ref{trans}, it is sufficient to prove the assertions for $\tilde{\mu}_{\delta,\mathcal{D}}$ instead of $m_{\delta}$. The proof of the
first assertion in the theorem is a straight-forward adaptation of the proof of
\cite[Theorem 1]{MR0387226}. 
The first necessary modification of the proof  in  \cite{MR0387226} is to  alter the definition of the process $L\left(M,N\right)$ considered in \cite{MR0387226}.  Namely, instead of  $L\left(M,N\right)$,
here we have to consider the process
\[
\tilde{L}\left(M,N\right):=\max_{n=M,\dots,M+N}Y_{n}^{2\delta-1},\mbox{ for }M,N\in\N.
\]
The second modification is that  instead of the normalising sequence $(\psi\left(n\right))$, introduced by Philipp in \cite{MR0387226},
we have to consider the normalising sequence $(\tilde{\psi} \left(n\right))$, given by
\[
\tilde{\psi} \left(n\right):= \kappa\left(G\right) n/\log\log n,\mbox{ for }n\in\N.
\]
Also, note that the type of mixing for the Gauss system, which is established  in \cite[Lemma 2]{MR0387226} and which is vital in the proof of \cite[Theorem 1]{MR0387226}, certainly also holds
for  our induced system $\left(\mathcal{D},T_{\mathcal{D}},\tilde{\mu}_{\delta,\mathcal{D}}\right)$.
This is an immediate consequence of Proposition \ref{prop:inducedsystem-cf-mixing}, which guarantees that $\left(\mathcal{D},T_{\mathcal{D}},\tilde{\mu}_{\delta,\mathcal{D}}\right)$ is continued fraction mixing with respect to the partition
$\alpha_{\mathcal{D}}$.  
Let us now  consider the sets $E_k:= \left\{ \tilde{L}\left(k^{2k},k^{2(k+1)}\right) \le \tilde{\psi}(k^{2(k+1)}) \right\}$. Since $\tilde{\mu}_{\delta,\mathcal{D}}$  is $T_{\mathcal{D}}$-invariant, we have  for each $k\in\N$  that 
\[
\tilde {\mu}_{\delta,\mathcal{D}} (E_k)
= \tilde {\mu}_{\delta,\mathcal{D}} \left( \left\{ \tilde{L}\left(0,k^{2(k+1)}\right)\le 
\tilde{\psi}(k^{2(k+1)})   \right\} \right).
\]
Using Theorem $\ref{thm}$, it follows that there is a constant $C_1>0$ such that for each $k>1$ we have
\[ 
\tilde {\mu}_{\delta,\mathcal{D}} (E_k) \ge C_1 \exp(-\log\log(k^{2(k+1)}))\ge C_1(4k\log{k})^{-1}.
\]
 Furthermore,  note that  $E_k\in \bigvee_{j=k^{2k}}^{k^{2(k+1)}+k^{2k}}T_{\mathcal{D}}^{-j}(\alpha_{\mathcal{D}})$ and  that $\lim_{k\rightarrow \infty}(k+1)^{2(k+1)}-k^{(2(k+1)}-k^{2k}=\infty$.  Since the induced system $\left(\mathcal{D},T_{\mathcal{D}},\tilde{\mu}_{\delta,\mathcal{D}}\right)$ is continued fraction mixing with respect to $\alpha_{\mathcal{D}}$, we can now apply  the second Borel-Cantelli Lemma (see for instance \cite{sprin}), which gives that  $\tilde {\mu}_{\delta,\mathcal{D}}$  almost every $\xi \in \mathcal{D}$ is contained in infinitely many of the sets $E_k$.  Similarly, let us now consider the sets   $F_k:= \left\{ \tilde{L}\left(0,k^{2k}\right) \ge \tilde{\psi}(k^{2(k+1)}) \right\}$. Note that that there exist constants $C_{2},C_{3}>0$ such that for each $k\in\N$ we have 
\begin{eqnarray*}
\tilde {\mu}_{\delta,\mathcal{D}} (F_k)
&\le& \sum_{j=1}^{ k^{2k} }\tilde {\mu}_{\delta,\mathcal{D}} \left( \left\{  X_j^{2\delta-1}\ge \tilde{\psi}(k^{2(k+1)})   \right\} \right)\\ &=&
k^{2k}\tilde {\mu}_{\delta,\mathcal{D}} \left( \left\{  X_2^{2\delta-1}\ge \tilde{\psi}(k^{2(k+1)})   \right\} \right)\\
& \leq & C_{2} k^{2k} \left( \tilde{\psi}(k^{2(k+1)})\right)^{-1} \leq C_{3} k^{-3/2}.
\end{eqnarray*}
Using the Borel-Cantelli Lemma, it follows that  $\tilde {\mu}_{\delta,\mathcal{D}}$  almost every $\xi \in \mathcal{D}$ is contained in at most finitely many of the sets $F_k$. Combining these two  observations and noting that 
$E_k \setminus F_k =   \left\{ \tilde{L}\left(0,k^{2k}+k^{2(k+1)}\right) \le \tilde{\psi}(k^{2(k+1)}) \right\}$ is a subset of  $\left\{ \tilde{L}\left(0,k^{2(k+1)}\right) \le \tilde{\psi}(k^{2(k+1)}) \right\}$, we conclude that $\tilde {\mu}_{\delta,\mathcal{D}}$  almost every $\xi \in \mathcal{D}$  is contained in  infinitely many sets of the form  $\left\{ \tilde{L}\left(0,k^{2(k+1)}\right) \le \tilde{\psi}(k^{2(k+1)}) \right\}$.
This implies that $\tilde {\mu}_{\delta,\mathcal{D}}$  almost everywhere we have that
\[
\liminf_{n\rightarrow\infty}\, Y_{n}^{2\delta-1} (\log\log n)/n\le\kappa\left(G\right).
\]
In order to prove that in here the reverse inequality also holds,  consider the sets $G_k:= \left\{ \tilde{L}\left(0,\lfloor r^k \rfloor \right) \le r^{-2} \tilde{\psi}( \lfloor  r^{k+1}  \rfloor ) \right\}$, for some fixed $r>1$. Since the convergence in Theorem $\ref{thm}$  is uniform in $\R$, it follows that there exists a constant $C_4>0$ such that for each $k\in\N$ we have 
\[
\tilde {\mu}_{\delta,\mathcal{D}} (G_k)
\le C_4 \exp(-r\log\log{r^k})\le C_4 k^{-r}.
\]
By employing the Borel-Cantelli Lemma once more,  it follows that for $\tilde {\mu}_{\delta,\mathcal{D}}$  almost every  $\xi \in \mathcal{D}$  there exists $k_0 \in \N$  such that  $\tilde{L}\left(0,\lfloor r^k \rfloor \right) ( \xi) > r^{-2}\tilde{\psi}(\lfloor r^{k+1}\rfloor) $, for all $k\ge k_0$. Clearly, for each $N>\lfloor r^{k_0} \rfloor$  there is  $k\ge k_0$ such that $\lfloor r^k \rfloor \le N < r^{k+1}$.  Using this, it follows that  $\tilde{L}\left(0,\lfloor r^k \rfloor \right) (\xi)  \le  Y_{N}^{2\delta-1}(\xi)$ and that $\tilde{\psi}(N)\le\tilde{\psi}(\lfloor r^{k+1}\rfloor)$. This shows that $ Y_{N}^{2\delta-1}(\xi) > r^{-2}\tilde{\psi}(N)$, for each $N>\lfloor r^{k_0} \rfloor$. Therefore, by letting $r$ tend to $1$ from above, we can now conclude that  $\tilde {\mu}_{\delta,\mathcal{D}}$  almost everywhere we have 
\[
\liminf_{n\rightarrow\infty}\, Y_{n}^{2\delta-1} (\log\log n)/n\ge\kappa\left(G\right).
\]
This finishes the proof of the first assertion in Theorem $\ref{prop:loglogliminf}$.

For the proof of the second assertion in the theorem, note that Proposition \ref{prop:measure}  immediately implies that there exists a constant $C_{5}>0$
such that
\[ C_{5}^{-1} \ell_{n}^{-1} \leq m_{\delta} \left(\left\{X_{n}^{2 \delta -1} \geq \ell_{n}\right\}\right) \leq  C_{5} \ell_{n}^{-1}. \]
Therefore, since  $\tilde{\mu}_{\delta, \mathcal{D}}$ and $m_{\delta}$ are comparable on $\{X_{1}=1\}$ and since  $\left(\mathcal{D},T_{\mathcal{D}},\tilde{\mu}_{\delta,\mathcal{D}}\right)$
is continued fraction mixing with respect to the partition  $\alpha_{\mathcal{D}}$, we can once more apply the Borel-Cantelli Lemma
and the second Borel-Cantelli Lemma, which gives
\[
\tilde{\mu}_{\delta, \mathcal{D}} \left(\left\{ X_{n}^{2 \delta -1}\geq \ell_{n} \mbox{ for infinitely many $n$} \right\}\right) =
\left\{ \begin{array}{ccc}
0 & \text{ if } & \sum_{n \in \N} \ell_{n}^{-1} \mbox{ converges} \\
1 & \text{ if } &  \sum_{n \in \N} \ell_{n}^{-1} \mbox{ diverges}. \end{array}\right.\]
We then proceed as follows. On the one hand, if  $\sum_{n \in \N} \ell_{n}^{-1}$ converges, then choose  a monotone increasing sequence $(d_{n})$
which tends to infinity, for $n$ tending to infinity, such that $  \sum_{n \in \N} d_{n }\ell_{n}^{-1} $ still converges. In this situation, we then have
\[  \tilde{\mu}_{\delta,\mathcal{D}} \left(\left\{ X_{n}^{2 \delta -1}/\ell_{n} <  1/d_{n} \mbox{ for at most finitely many $n$}   \right\}\right) =1 ,\]
which implies that $\tilde{\mu}_{\delta,\mathcal{D}}$ almost everywhere we have 
\[ \limsup_{n \to \infty}  X_{n}^{2 \delta -1}/\ell_{n} =0.\]
On the other hand, if $\sum_{n \in \N} \ell_{n}^{-1}$ diverges, then choose  a monotone decreasing sequence $(e_{n})$ which tends to zero, for $n$ tending to infinity,  such that  we still have that $ \sum_{n \in \N} e_{n }\ell_{n}^{-1} $ diverges. It then follows that
\[  \tilde{\mu}_{\delta,\mathcal{D}} \left(\left\{X_{n}^{2 \delta -1}/\ell_{n}\geq 1/e_{n} \mbox{ for infinitely many $n$}   \right\}\right) =1 .\]
This implies that  $\tilde{\mu}_{\delta,\mathcal{D}}$ almost everywhere, we have 
\[ \limsup_{n \to \infty} X_{n}^{2 \delta -1} / \ell_{n} =\infty.\]
By combining these two observations and then using Lemma \ref{trans}, it now follows that  $m_{\delta}$ almost everywhere we have that
\[ \limsup_{n \to \infty} X_{n}^{2 \delta -1} / \ell_{n}\in \{0,\infty\}.\]
Now, the proof of the second assertion in Theorem \ref{prop:loglogliminf} follows from the following elementary observation.
\begin{itemize}
\item[] {\bf Limsup-Max Principle.}  {\em Let $\left(p_{n}\right)$ and $\left(q_{n}\right)$ be two arbitrary sequences
of positive real numbers  such that
 $\left(q_{n}\right)$ is unbounded and non-decreasing.
We then have that }
\begin{equation}
\limsup_{n\to\infty}\frac{p_{n}}{q_{n}}=\limsup_{n\rightarrow\infty}\frac{\max_{k=1,\ldots,n}p_{k}}{q_{n}}.\label{eq:maximum-exercise}
\end{equation}
\end{itemize}
  \end{proof}
}
\begin{proof}
[Proof of Corollary $\ref{cor:log-khintchine}$] For the upper bound
of the first assertion of the corollary, observe that by Proposition
\ref{prop:measure}, we have that for each  $\epsilon>0$ there exists a constant $C_{\epsilon}>0$ such that for each  $n \in \N$, we have that
\[
C_{\epsilon}^{-1} n^{-\left(1+\epsilon\right)} \leq m_{\delta}\left(\left\{ \frac{\log X_{n}}{\log n}\ge\frac{1+\epsilon}{2\delta-1}\right\} \right)\leq C_{\epsilon} n^{-\left(1+\epsilon\right)}.
\]
By applying the Borel-Cantelli Lemma and then letting $\epsilon$ tend
to zero, we obtain that $m_{\delta}$ almost everywhere we have that
\begin{equation}
\limsup_{n\rightarrow\infty}\frac{\log X_{n}}{\log n}\le\frac{1}{2\delta-1}.\label{eq:upperbound}
\end{equation}
By employing the Limsup-Max Principle (\ref{eq:maximum-exercise}), it follows that $m_{\delta}$ almost everywhere we have that
\[
\limsup_{n\to \infty}\frac{\log Y_{n}}{\log n}\le\frac{1}{2\delta-1}.
\]
Clearly, the lower bound
of the first assertion of the corollary is an immediate consequence of Theorem \ref{prop:loglogliminf}.
 Therefore, we have now shown  that $m_{\delta}$ almost everywhere we have that
\[
\lim_{n\rightarrow\infty}\frac{\log Y_{n}}{\log n}=\frac{1}{2\delta-1},
\]
which finishes the proof of the first part of the corollary.

Finally, let us show how to derive the second assertion of the corollary
from the first. For this, recall that each  $\xi\in\mathcal{D}$  can be coded
 by $(g_{1}^{X_{1}(\xi)} g_{2}^{X_{2}(\xi)}...)$, for some uniquely determined $g_{1},g_{2},... \in G_{0}$.
We then
 define the cocycle $I^{*}:\mathcal{D} \to \R$ and its  Birkhoff sum $t_{n}$ with respect to $T_{\mathcal{D}}$, for all $\xi\in\mathcal{D}$,  by (see \cite[3.1.3]{MR2041265})
\[
I^{*}\left(\xi\right):=-\log\left| \left(g_{1}^{X_{1}(\xi)}\right)' \left(g_{1}^{-X_{1}(\xi)}\right)\right|\,\,\mbox{ and   }  \, \, t_{n}\left(\xi\right):=\sum_{k=0}^{n-1}I^{*}\circ T_{\mathcal{D}}^{k}\left(\xi\right).
\]
Since $\left(\mathcal{D},T_{\mathcal{D}},\tilde{\mu}_{\delta,\mathcal{D}}\right)$
is ergodic, we have  $\tilde{\mu}_{\delta,\mathcal{D}}$ almost
everywhere and  for some constant $C>0$ that (see \cite[(3.1.3)]{MR2041265})
\[
\lim_{n \to \infty}\frac{t_{n}}{n}= \int I^{*}\;d\tilde{\mu}_{\delta,\mathcal{D}}\leq C \sum_{k=1}^{\infty}k^{-2\delta}\log k<\infty.
\]
This  implies that $\lim_{n} \log t_{n} (\xi) / \log n =1$, for $\tilde{\mu}_{\delta,\mathcal{D}}$ almost every $\xi\in\mathcal{D}$. Moreover, since $\lim_{n} \log (n+1)  / \log n =1$, it follows that
$\lim_{n} \log t / \log n =1$, for  all
$t \in [t_{n}\left(\xi\right), t_{n+1} \left(\xi\right)]$.
Next, note that by elementary hyperbolic geometry (see for instance \cite{MR568933,MR1346819}),
we have that there exists a constant $K>0$ such that for all $\xi \in \mathcal{D}$ and $n\in\N$ we have
\[
K^{-1} \, X_{n}\left(\xi\right)\leq \exp\left(d\left(\xi_{\left(t_{n}\left(\xi\right)+t_{n+1}\left(\xi\right)\right)/2},G\left(i\right)\right)\right) \leq K \, X_{n}\left(\xi\right).
\]
Combining these observations, it follows that for $\tilde{\mu}_{\delta,\mathcal{D}}$ almost every $\xi\in\mathcal{D}$ we have 

\[
\lim_{T \to \infty} \, \, \max_{0\le t\le T}\frac{d\left(\xi_{t},G\left(i\right)\right) }{\log T}  =  \lim_{n\to \infty} \, \, \max_{k=1,\dots,n}\frac{d\left(\xi_{\left(t_{k}\left(\xi\right)+t_{k+1}\left(\xi\right)\right)/2},G\left(i\right)\right)}{\log n}
  =  \lim_{n\to \infty} \frac{\log Y_{n}\left(\xi\right)}{\log n}.
\]
Now, an application of Lemma \ref{trans} finishes the proof of the corollary.
\end{proof}

\providecommand{\bysame}{\leavevmode\hbox to3em{\hrulefill}\thinspace}
\providecommand{\MR}{\relax\ifhmode\unskip\space\fi MR }
\providecommand{\MRhref}[2]{%
  \href{http://www.ams.org/mathscinet-getitem?mr=#1}{#2}
}
\providecommand{\href}[2]{#2}


\begin{thebibliography}{BNNN95}

\bibitem[Aar81]{MR632462}
J.~Aaronson, \emph{The asymptotic distributional behaviour of transformations
  preserving infinite measures}, J. Analyse Math. \textbf{39} (1981), 203--234.
  \MR{632462 (82m:28030)}

\bibitem[Aar97]{MR1450400}
\bysame, \emph{An introduction to infinite ergodic theory}, Mathematical
  Surveys and Monographs, vol.~50, American Mathematical Society, Providence,
  RI, 1997. \MR{1450400 (99d:28025)}

\bibitem[ADU93]{MR1107025}
J.~Aaronson, M.~Denker, and M.~Urba{{\'n}}ski, \emph{Ergodic theory for
  {M}arkov fibred systems and parabolic rational maps}, Trans. Amer. Math. Soc.
  \textbf{337} (1993), no.~2, 495--548. \MR{1107025 (94g:58116)}

\bibitem[Bea71]{Beardon2}
A.~F. Beardon, \emph{Inequalities for certain {F}uchsian groups}, Acta Math.
  \textbf{127} (1971), 221--258. \MR{0286996 (44 \#4203)}

\bibitem[Bea95]{Beardon}
A.~F. Beardon, \emph{The geometry of discrete groups}, Graduate Texts in
  Mathematics, vol.~91, Springer-Verlag, New York, 1995, Corrected reprint of
  the 1983 original. \MR{1393195 (97d:22011)}

\bibitem[Ber64]{MR0161365}
S.~M. Berman, \emph{Limit theorems for the maximum term in stationary
  sequences}, Ann. Math. Statist. \textbf{35} (1964), 502--516. \MR{0161365 (28
  \#4572)}

\bibitem[BNN95]{MR1340561}
V.~Balakrishnan, C.~Nicolis, and G.~Nicolis, \emph{Extreme value distributions
  in chaotic dynamics}, J. Statist. Phys. \textbf{80} (1995), no.~1-2,
  307--336. \MR{1340561 (96f:60083)}

\bibitem[BS79]{MR556585}
R.~Bowen and C.~Series, \emph{Markov maps associated with {F}uchsian groups},
  Inst. Hautes {\'E}tudes Sci. Publ. Math. (1979), no.~50, 153--170. \MR{556585
  (81b:58026)}

\bibitem[Col01]{MR1827111}
P.~Collet, \emph{Statistics of closest return for some non-uniformly hyperbolic
  systems}, Ergodic Theory Dynam. Systems \textbf{21} (2001), no.~2, 401--420.
  \MR{1827111 (2002a:37038)}

\bibitem[Dav79]{MR528323}
R.~A. Davis, \emph{Maxima and minima of stationary sequences}, Ann. Probab.
  \textbf{7} (1979), no.~3, 453--460. \MR{528323 (81d:60028)}

\bibitem[Dol97]{Dol} D. Dolgopyat, \emph{On statistical properties of geodesic 
flows on negatively curved surfaces}, PhD
Thesis (Princeton University), 1997. 
\bibitem[EP94]{MR1279064}
D.~B.~A. Epstein and C.~Petronio, \emph{An exposition of {P}oincar{\'e}'s
  polyhedron theorem}, Enseign. Math. (2) \textbf{40} (1994), no.~1-2,
  113--170. \MR{1279064 (95f:57030)}

\bibitem[Fer11]{thesis}
A.~J. Ferguson, \emph{Dimension theory and dynamically defined sets}, Ph.D.
  Thesis (University of Warwick), 2011.

\bibitem[FF08]{MR2422964}
A.~C.~M. Freitas and J.~M. Freitas, \emph{On the link between dependence and
  independence in extreme value theory for dynamical systems}, Statist. Probab.
  Lett. \textbf{78} (2008), no.~9, 1088--1093. \MR{2422964 (2009e:37006)}

\bibitem[FFT10]{MR2639719}
A.~C.~M. Freitas, J.~M. Freitas, and M.~Todd, \emph{Hitting time statistics and
  extreme value theory}, Probab. Theory Related Fields \textbf{147} (2010),
  no.~3-4, 675--710. \MR{2639719 (2011g:37015)}

\bibitem[FFT11]{MR2749711}
\bysame, \emph{Extreme value laws in dynamical systems for non-smooth
  observations}, J. Stat. Phys. \textbf{142} (2011), no.~1, 108--126.
  \MR{2749711}

\bibitem[Flo80]{MR568933}
W.~J. Floyd, \emph{Group completions and limit sets of {K}leinian groups},
  Invent. Math. \textbf{57} (1980), no.~3, 205--218. \MR{568933 (81e:57002)}

\bibitem[Fr{\'e}28]{54.0562.06}
M.~Fr{\'e}chet, \emph{{Sur la loi de probabilit{\'e} de l'{\'e}cart maximum.}},
  Annales Soc. Polonaise \textbf{6} (1928), 93--122.

\bibitem[FT28]{54.0560.05}
R.~A. Fisher and L.~H.~C. Tippett, \emph{{Limiting forms of the frequency
  distribution of the largest or smallest member of a sample.}}, Proceedings
  Cambridge \textbf{24} (1928), 180--190.

\bibitem[Gal72]{MR0299576}
J.~Galambos, \emph{The distribution of the largest coefficient in continued
  fraction expansions}, Quart. J. Math. Oxford Ser. (2) \textbf{23} (1972),
  147--151. \MR{0299576 (45 \#8624)}

\bibitem[Gal73]{MR0352028}
\bysame, \emph{The largest coefficient in continued fractions and related
  problems}, Diophantine approximation and its applications ({P}roc. {C}onf.,
  {W}ashington, {D}.{C}., 1972), Academic Press, New York, 1973, pp.~101--109.
  \MR{0352028 (50 \#4516)}

\bibitem[GHN11]{GuptaHollanderNicol2011}
C.~Gupta, M.~Hollander, and M.~Nicol, \emph{Extreme value theory and return
  time statistics for dispersing billiard maps and flows, lozi maps and
  lorenz-like maps}, Ergodic Theory Dynam. Systems \textbf{in press} (2011).

\bibitem[Gne43]{MR0008655}
B.~Gnedenko, \emph{Sur la distribution limite du terme maximum d'une s{\'e}rie
  al{\'e}atoire}, Ann. of Math. (2) \textbf{44} (1943), 423--453. \MR{0008655
  (5,41b)}

\bibitem[GNO10]{MR2669635}
C.~Gupta, M.~Nicol, and W.~Ott, \emph{A {B}orel-{C}antelli lemma for
  nonuniformly expanding dynamical systems}, Nonlinearity \textbf{23} (2010),
  no.~8, 1991--2008. \MR{2669635}

\bibitem[Gum58]{MR0096342}
E.~J. Gumbel, \emph{Statistics of extremes}, Columbia University Press, New
  York, 1958. \MR{0096342 (20 \#2826)}

\bibitem[Gup10]{MR2643710}
C.~Gupta, \emph{Extreme-value distributions for some classes of non-uniformly
  partially hyperbolic dynamical systems}, Ergodic Theory Dynam. Systems
  \textbf{30} (2010), no.~3, 757--771. \MR{2643710}

\bibitem[Khi64]{khinchin-continuedfractionsMR0161833}
A.~Y. Khinchin, \emph{Continued fractions}, The University of Chicago Press,
  Chicago, Ill.-London, 1964. \MR{MR0161833 (28 \#5037)}

\bibitem[KS04]{MR2041265}
M.~Kesseb{{\"o}}hmer and B.~O. Stratmann, \emph{A multifractal formalism for
  growth rates and applications to geometrically finite {K}leinian groups},
  Ergodic Theory Dynam. Systems \textbf{24} (2004), no.~1, 141--170.
  \MR{2041265 (2005e:37061)}

\bibitem[Lea74]{MR0362465}
M.~R. Leadbetter, \emph{On extreme values in stationary sequences}, Z.
  Wahrscheinlichkeitstheorie und Verw. Gebiete \textbf{28} (1973/74), 289--303.
  \MR{0362465 (50 \#14906)}

\bibitem[LLR83]{MR691492}
M.~R. Leadbetter, G.~Lindgren, and H.~Rootz{\'e}n, \emph{Extremes and related
  properties of random sequences and processes}, Springer Series in Statistics,
  Springer-Verlag, New York, 1983. \MR{691492 (84h:60050)}

\bibitem[Loy65]{MR0176530}
R.~M. Loynes, \emph{Extreme values in uniformly mixing stationary stochastic
  processes}, Ann. Math. Statist. \textbf{36} (1965), 993--999. \MR{0176530 (31
  \#802)}

\bibitem[MFF08]{MR2437222}
A.~C. Moreira~Freitas and J.~M. Freitas, \emph{Extreme values for
  {B}enedicks-{C}arleson quadratic maps}, Ergodic Theory Dynam. Systems
  \textbf{28} (2008), no.~4, 1117--1133. \MR{2437222 (2010a:37013)}

\bibitem[Nic89]{MR1041575}
P.~J. Nicholls, \emph{The ergodic theory of discrete groups}, London
  Mathematical Society Lecture Note Series, vol. 143, Cambridge University
  Press, Cambridge, 1989. \MR{1041575 (91i:58104)}

\bibitem[NN03]{MR1973926}
H.~Nakada and R.~Natsui, \emph{On the metrical theory of continued fraction
  mixing fibred systems and its application to {J}acobi-{P}erron algorithm},
  Monatsh. Math. \textbf{138} (2003), no.~4, 267--288. \MR{1973926
  (2004i:37008)}

\bibitem[Pat76]{MR0450547}
S.~J. Patterson, \emph{The limit set of a {F}uchsian group}, Acta Math.
  \textbf{136} (1976), no.~3-4, 241--273. \MR{0450547 (56 \#8841)}

\bibitem[Phi76]{MR0387226}
W.~Philipp, \emph{A conjecture of {E}rd{\H o}s on continued fractions}, Acta
  Arith. \textbf{28} (1975/76), no.~4, 379--386. \MR{0387226 (52 \#8069)}

\bibitem[Pol09]{MR1500147}
M.~Pollicott, \emph{Limiting distributions for geodesics excursions on the
  modular surface}, Spectral analysis in geometry and number theory, Contemp.
  Math., vol. 484, Amer. Math. Soc., Providence, RI, 2009, pp.~177--185.
  \MR{1500147 (2010c:37074)}

\bibitem[Sch95]{MR1419320}
F.~Schweiger, \emph{Ergodic theory of fibred systems and metric number theory},
  Oxford Science Publications, The Clarendon Press Oxford University Press, New
  York, 1995. \MR{1419320 (97h:11083)}

\bibitem[Spr69]{sprin}
V.~G. Sprind{\v{z}}uk, \emph{Mahler's problem in metric number theory},
  Translated from the Russian by B. Volkmann. Translations of Mathematical
  Monographs, Vol. 25, American Mathematical Society, Providence, R.I., 1969.
  \MR{0245527 (39 \#6833)}

\bibitem[SS05]{MR2158407}
M.~Stadlbauer and B.~O. Stratmann, \emph{Infinite ergodic theory for {K}leinian
  groups}, Ergodic Theory Dynam. Systems \textbf{25} (2005), no.~4, 1305--1323.
  \MR{2158407 (2006e:37010)}

\bibitem[Sta04]{MR2098779}
M.~Stadlbauer, \emph{The return sequence of the {B}owen-{S}eries map for
  punctured surfaces}, Fund. Math. \textbf{182} (2004), no.~3, 221--240.
  \MR{2098779 (2006c:37031)}

\bibitem[Str95]{MR1346819}
B.~Stratmann, \emph{A note on counting cuspidal excursions}, Ann. Acad. Sci.
  Fenn. Ser. A I Math. \textbf{20} (1995), no.~2, 359--372. \MR{1346819
  (96k:58174)}

\bibitem[Sul79]{MR556586}
D.~Sullivan, \emph{The density at infinity of a discrete group of hyperbolic
  motions}, Inst. Hautes {\'E}tudes Sci. Publ. Math. (1979), no.~50, 171--202.
  \MR{556586 (81b:58031)}

\bibitem[Sul82a]{MR634434}
\bysame, \emph{Discrete conformal groups and measurable dynamics}, Bull. Amer.
  Math. Soc. (N.S.) \textbf{6} (1982), no.~1, 57--73. \MR{634434 (83c:58066)}

\bibitem[Sul82b]{MR688349}
\bysame, \emph{Disjoint spheres, approximation by imaginary quadratic numbers,
  and the logarithm law for geodesics}, Acta Math. \textbf{149} (1982),
  no.~3-4, 215--237. \MR{688349 (84j:58097)}

\bibitem[Sul84]{MR766265}
\bysame, \emph{Entropy, {H}ausdorff measures old and new, and limit sets of
  geometrically finite {K}leinian groups}, Acta Math. \textbf{153} (1984),
  no.~3-4, 259--277. \MR{766265 (86c:58093)}

\bibitem[SV95]{MR1327939}
B.~Stratmann and S.~L. Velani, \emph{The {P}atterson measure for geometrically
  finite groups with parabolic elements, new and old}, Proc. London Math. Soc.
  (3) \textbf{71} (1995), no.~1, 197--220. \MR{1327939 (97f:58023)}

\bibitem[Tha98]{MR1603998}
M.~Thaler, \emph{The {D}ynkin-{L}amperti arc-sine laws for measure preserving
  transformations}, Trans. Amer. Math. Soc. \textbf{350} (1998), no.~11,
  4593--4607. \MR{1603998 (99k:28021)}

\bibitem[Wat54]{MR0065122}
G.~S. Watson, \emph{Extreme values in samples from {$m$}-dependent stationary
  stochastic processes}, Ann. Math. Statistics \textbf{25} (1954), 798--800.
  \MR{0065122 (16,385h)}

\end{thebibliography}
\end{document}